\newtheorem{thm}{Theorem}[section]
\newtheorem{cor}[thm]{Corollary}
\newtheorem{lem}[thm]{Lemma}
\newtheorem{prop}[thm]{Proposition}
\newtheorem{defn}[thm]{Definition}
\newtheorem{exam}[thm]{Example}
\newtheorem{rem}[thm]{Remark}
\numberwithin{equation}{section}
\newcommand{\ZZ}{\mathbb{Z}}
\def\cal{\mathcal }
\begin{document}

\title[Maximal solvable extensions]{On maximal solvable extensions of nilpotent Lie algebras}

\author{B.A. Omirov, G.O. Solijanova}

\address{Bakhrom A. Omirov \newline \indent
Institute for Advanced Study in Mathematics,
Harbin Institute of Technology, Harbin 150001 \newline \indent
Suzhou Research Institute, Harbin Institute of Technology, Harbin  215104, Suzhou, China}
\email{{\tt omirovb@mail.ru}}

\address{Gulkhayo O. Solijanova \newline \indent
National University of Uzbekistan, 100174, Tashkent, Uzbekistan,
\newline \indent
V.I. Romanovskiy Institute of Mathematics, Uzbekistan Academy of Sciences, Uzbekistan}\email{{\tt gulhayo.solijonova@mail.ru}}

\begin{abstract} In this paper, we provide a complete description of complex maximal solvable extensions for a certain class of nilpotent Lie algebras. In particular, we show that, up to isomorphism, a solvable extension of a $d$-locally diagonalizable nilpotent Lie algebra is unique and is realized as the semidirect product of its nilradical with a maximal torus. This result resolves a conjecture of \v{S}nobl concerning the uniqueness of maximal solvable extensions under the condition $d$-locally diagonalizability on the nilradical. Moreover, we extend this description to the setting of Lie superalgebras and present an alternative method for constructing such maximal solvable extensions. Finally, we discuss further aspects and open questions related to maximal solvable extensions of nilpotent Lie algebras
\end{abstract}

\subjclass[2010]{17B05, 17B22, 17B30, 17B40.}

\keywords{Lie superalgebra, solvable superalgebra, nilpotent superalgebra, derivation, complete superalgebra, solvable extension, torus}

\maketitle

\section{Introduction}

The theory of Lie algebras has been the subject of extensive research in modern algebra and theoretical physics, yielding many beautiful results and generalizations. In the theory of finite-dimensional Lie algebras, Levi's decomposition, together with fundamental results of Malcev's and Mubarakzjanov's, reduces the description problem of Lie algebras to the study of nilpotent ones along with their special kind of derivations \cite{Mal}, \cite{mubor} and representations of semisimple algebras.

In fact, the description of finite-dimensional nilpotent Lie algebras is an immense problem, that is why they are usually studied under some additional restrictions. In contrast, the structure of non-nilpotent solvable Lie algebras is relatively well understood. The point here is that these algebras can be reconstructed from their nilradical using non-nilpotent derivations. Mubarakzjanov's method of constructing solvable Lie algebras in terms of their nilradicals is effectively used \cite{mubor}. The investigation of solvable Lie algebras with some special types of nilradicals arises from different problems in physics and has been the subject of various papers \cite{AnCaGa1,AnCaGa2,BoPaPo,Cam,NdWi,SnKa,SnWi,TrWi,WaLiDe} and references therein. For instance, solvable Lie algebras whose nilradicals are quasi-filiform, $\mathbb{N}$-graded, Abelian and triangular were considered in \cite{AnCaGa2,Cam,NdWi,TrWi}.

One particularly noteworthy motivation for the classification and identification of Lie algebras, as discussed in \cite{snoble}, arises when two systems of differential equations of the same order describing different physical processes have isomorphic Lie algebras of infinitesimal point symmetries that, by construction, are realized by vector fields.

Recall that a Lie algebra is called {\it characteristically nilpotent} if all its derivations are nilpotent. For a survey on characteristically nilpotent Lie algebras, we refer the reader to \cite{Ancochea111}. It should be noted that characteristically nilpotent Lie algebras can not be identified as the nilradical of non-nilpotent solvable Lie algebras. In other words, the nilradical of a non-nilpotent solvable Lie algebra is non-characteristically nilpotent.

Based on numerous results on the descriptions of solvable Lie algebras with given nilradicals, which demonstrate that the description of non-maximal extensions of nilpotent Lie algebras is boundless problem, while maximal extensions are unique (up to isomorphism), \v{S}nobl \cite{Snobl} hypothesized the following

{\bf Conjecture:} Let $\mathfrak{n}$ be a complex nilpotent Lie algebra that is not characteristically nilpotent. Let $\mathfrak{s}, \tilde{\mathfrak{s}}$ be solvable Lie algebras with the nilradical $\mathfrak{n}$ of maximal dimension in the sense that no such solvable algebra of larger dimension exists. Then, $\mathfrak{s}$ and $\tilde{\mathfrak{s}}$ are isomorphic.

It is worth noting that in \cite{mubor}, two non-isomorphic maximal solvable extensions of the three-dimensional Heisenberg algebra over the real numbers were constructed. This example indicates that, to ensure the uniqueness of maximal solvable extensions, our study need to be carried out over an algebraically closed field of characteristic zero.

In \cite{Gorbatsevich}, Gorbatsevich discovered a counterexample to the conjecture; he constructed two non-isomorphic maximal solvable extensions of the nilpotent Lie algebra, which are the direct sum of characteristically nilpotent and abelian ideals. In order to find out the condition under which the conjecture is false, following Gorbatsevich's example, we construct an example (Example \ref{exam15}) that gives an idea what condition should be imposed on a nilpotent Lie algebra to guarantee a positive solution of \v{S}nobl's conjecture.

Recall that Lie superalgebras extend the classical Lie algebras by introducing a $\mathbb{Z}_2$-grading, this grading scheme distinguishes between elements of different parity, allowing for the introduction of a graded Lie bracket, which extends the classical Lie bracket to accommodate both commuting and anticommuting elements. The structure theory of Lie superalgebras is more complex than the theory of Lie algebras due to the presence of odd elements, nevertheless significant progress has been achieved in the classification of simple and basic classical Lie superalgebras. The results on the algebraic structures of Lie superalgebras, as well as the physics outcomes within the framework of supersymmetry, can be found in \cite{Cooper, Kac, Lusztig, Molev, Serganova} and references therein.

Note that Leibniz algebras represent another generalization of Lie algebras, which inherit a property of that the right multiplication operators on an element of an algebra are derivations \cite{Loday}. They
are naturally generalize of Lie algebras such that many classical result of Lie algebras are true for Leibniz algebras \cite{Ayupov}, \cite{Feldvoss}.

The main goal of this paper is to investigate the uniqueness of maximal solvable extensions of nilpotent Lie algebras and to describe the properties of their derivations. We also examine whether these results can be extended to generalizations of Lie algebras such as Lie superalgebras and Leibniz algebras.

This paper studies the structure of maximal solvable extensions of nilpotent Lie algebras. The central concept introduced is that of a $d$-locally diagonalizable nilpotent Lie algebra, defined by three conditions ensuring that nil-independent derivations generate a maximal torus and act diagonally on root subspaces, with no zero root present. The innerness of all derivations of maximal solvable extensions of $d$-locally diagonalizable Lie algebras is established. We also examine whether the description of maximal solvable extensions can be extended to generalizations of Lie algebras such as Lie superalgebras and Leibniz algebras. Some conjectures are proposed regarding the broader validity of the uniqueness results. This results gives the positive answer to \v{S}nobl's Conjecture for the case of nilradical is complex $d$-locally diagonalizable nilpotent Lie algebra.

The paper is organized as follows. In Section 3, we introduce the notion of $d$-locally diagonalizability and provide a complete description of maximal solvable extensions of $d$-locally diagonalizable nilpotent Lie algebras. In particular, by applying suitable basis transformations induced by inner automorphisms of the nilradical, we establish that a maximal solvable extension of a $d$-locally diagonalizable nilpotent Lie algebra is isomorphic to the semidirect product of its nilradical with a maximal torus (Theorem~\ref{mainthm}) and the uniqueness of this extension follows from the conjugacy of maximal tori in nilpotent Lie algebras.

In Section 4, assuming the validity of an analogue of Lie's theorem, we establish the uniqueness, up to isomorphism, of maximal solvable extensions for $d$-locally diagonalizable nilpotent Lie superalgebras (Theorem~\ref{mainthm2}). Furthermore, we provide an explicit method for constructing maximal tori in nilpotent Lie (super)algebras, which is then used to realize these maximal extensions. We also provide sufficient conditions under which non-maximal solvable extensions admit outer derivations (Proposition~\ref{thmH1neq0}) and establish that maximal solvable extensions of $d$-locally diagonalizable nilpotent Lie algebras possess only inner derivations (Theorem~\ref{thm5.4}). Finally, we present an example demonstrating that, in contrast to the Lie algebra case, the uniqueness of maximal solvable extensions does not generally hold for $d$-locally diagonalizable nilpotent Leibniz algebras. Motivated by examples concerning maximal solvable extensions, we propose two natural conjectures and outline directions for further investigation into other types of maximal solvable extensions.

Throughout the paper we consider finite-dimensional vector spaces and algebras over the field $\mathbb{C}$. In addition, unless otherwise noted we shall assume non-nilpotent solvable algebras and nilpotent algebras which are non-characteristically nilpotent.

\section{Preliminaries}

In this section, we present the definitions and preliminary results that will be used in this paper.  For basic notions and results on nilpotent and solvable Lie algebras we refer the reader to \cite{Jac}. We introduce only the notion of a torus on a nilpotent Lie algebra, along with related results, which play an important role in our study.

\begin{defn} A torus on a Lie algebra $\mathcal{L}$ is an abelian subalgebra of $Der(\mathcal{L})$ consisting of semisimple endomorphisms. A torus is said to be maximal if it is not strictly contained  in any other torus. We denote a maximal torus by $\cal T$.
\end{defn}

In the case of algebraically closed field semisimple endomorphism means diagonalizable. Choosing an appropriate basis of a Lie algebra we can bring the mutually commuting diagonalizable derivations into a diagonal form, simultaneously. Applying simple arguments on maximal fully reducible subalgebras of the linear Lie algebra $\operatorname{Der}(\mathcal{N})$ and Mostow's results (see Theorem 4.1 in \cite{Mostow}), one can conclude that any two maximal tori of the nilpotent Lie algebra $\mathcal{N}$ are conjugate under an inner automorphism $\varphi\in \operatorname{Aut}(\mathcal{N})$.
The automorphism $\varphi$ is a similarity in view of the identity:
$\widetilde{\cal T}=\varphi \circ \cal T \circ \varphi^{-1}.$
Consequently, two arbitrary maximal tori of a nilpotent Lie algebra have the same dimension, which is called {\it rank} of the nilpotent Lie algebra  (denoted by $\operatorname{rank}(\cal N)$).

Consider root spaces decomposition of a Lie algebra $\cal N$ with respect to its a maximal torus $\cal T$:
$$\cal N=\cal N_{\alpha_1}\oplus \cal N_{\alpha_2} \oplus \dots \oplus \cal N_{\alpha_n},$$ where
$\cal L_{\alpha_i}=\left\{n_{\alpha_i}\in \cal L \ |  \ [n_{\alpha_i}, x]=\alpha_i(x)n_{\alpha_i},\, \ \forall \ x \in \cal T\right\}$ and a root $\alpha_i$ belongs to the dual space of $\cal T.$  Then $\cal T$ admits a basis $\{t_1, \dots, t_s\}$ which satisfies the condition
\begin{equation}\label{eq3.1.1}
t_i(n_{\alpha_j})=\alpha_{i,j}n_{\alpha_j}, 1\leq i,j\leq s,
\end{equation}
where $\alpha_{i,i}\neq 0, \ \alpha_{i,j}=0$ for $i\neq j$ and $n_{\alpha_j}\in \cal L_{\alpha_j}.$

For a nilpotent Lie algebra such that $\operatorname{dim}(\cal N /\cal N^2)=k$ we denote by $W=\left\{\alpha_i\in {\cal T}^\ast \ : \ \cal N_{\alpha_i}\neq 0\right\}$ the roots system of  $\cal N$  associated to  $\cal T,$
by $\Psi=\{\alpha_1, \dots, \alpha_s, \alpha_{s+1}, \dots, \alpha_k\}$ the set of simple roots of $W$ such that any root subspace with index in $\Psi$ contains at least one generator element of $\cal L$ and by $\Psi_1=\left\{\alpha_1,\ldots,\alpha_s\right\}$ the set of primitive roots such that any non-primitive root can be expressed by a linear combination of them. In fact, any root $\alpha\in W$ we have $\alpha=\sum\limits_{\alpha_i\in \Psi_1}p_i\alpha_i$ with $p_i\in \mathbb{Z}.$

Let $\mathcal{R}=\cal N \oplus \cal Q$ be a solvable Lie algebra with nilradical $\mathcal{N}$ and let $\mathcal{Q}$ be a complementary subspace of $\mathcal{N}$. From \cite{mubor}, it is known that the operator $\operatorname{ad}_{x|\mathcal{N}}$ is non-nilpotent for every element $x \in \mathcal{Q}$. This result motivates the definition of nil-independent derivations.
\begin{defn} The derivations $d_1, d_2, \dots, d_s$ of a Lie algebra $\mathcal{N}$ are said to be nil-independent if $\sum\limits_{i=1}^{s}\alpha_i d_i$ is a non-nilpotent derivation
for any non-zero scalars $\alpha_1, \alpha_2, \dots, \alpha_s \in \mathbb{C}$.
\end{defn}
Then in terms of nil-independent derivations, we obtain an upper bound for the dimension of $\mathcal{Q}$. Specifically, the dimension of $\mathcal{Q}$ is bounded above by the maximal number of nil-independent derivations of the nilradical $\mathcal{N}$.

In what follows, we introduce the central concept underlying the present paper.
\begin{defn} A solvable Lie algebra $\mathcal{R}$ with nilradical $\cal N$ is called a  maximal solvable extension of the nilpotent Lie algebra $\mathcal{N}$, if $\operatorname{dim} \cal Q$ is maximal. In other words, $\cal R$ is a solvable Lie algebra with the nilradical $\cal N$ of maximal dimension in the sense that no such solvable algebra of larger dimension exists.
\end{defn}

The next proposition, gives a necessary condition for a Lie algebra with only inner derivations to have a non-trivial center.
\begin{prop}\cite{Leger1} \label{prop1}  Let $\mathcal{L}$ be a Lie algebra over a field of characteristic $0$ such that $Der(\mathcal{L})=\operatorname{InDer}(\mathcal{L}).$ If the center of the Lie algebra $\mathcal{L}$ is non-trivial, then $\mathcal{L}$ is not solvable and the radical of $\mathcal{L}$ is nilpotent.
\end{prop}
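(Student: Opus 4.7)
Both assertions rest on one observation that comes from the hypothesis $\operatorname{Der}(\mathcal{L})=\operatorname{InDer}(\mathcal{L})$: every inner derivation $\operatorname{ad}(y)$ vanishes on $Z(\mathcal{L})$ and has image inside $[\mathcal{L},\mathcal{L}]$, so under the hypothesis \emph{every} derivation of $\mathcal{L}$ must satisfy both constraints. The main tool for producing test derivations is the remark that, for any $0\neq z\in Z(\mathcal{L})$ and any linear form $f\colon\mathcal{L}\to\mathbb{C}$ vanishing on $[\mathcal{L},\mathcal{L}]$, the map $\phi(x)=f(x)z$ is automatically a derivation, since $\phi([x,y])=f([x,y])z=0$ and $[\phi(x),y]+[x,\phi(y)]=f(x)[z,y]+f(y)[x,z]=0$ by centrality of $z$.

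\textbf{Non-solvability of $\mathcal{L}$.} I would assume $\mathcal{L}$ solvable and construct an outer derivation to reach a contradiction. Since $\mathcal{L}$ is solvable, $[\mathcal{L},\mathcal{L}]\subsetneq\mathcal{L}$, so nonzero $f$ as above exist. If some $z\in Z(\mathcal{L})$ can be chosen outside $[\mathcal{L},\mathcal{L}]$, take $f$ with $f(z)=1$: the derivation $\phi$ then has image $\mathbb{C}z\not\subseteq[\mathcal{L},\mathcal{L}]$, so $\phi$ cannot be inner, contradicting the hypothesis. The remaining case $Z(\mathcal{L})\subseteq[\mathcal{L},\mathcal{L}]$ is handled by induction on $\dim\mathcal{L}$: quotient by a one-dimensional central ideal $\mathbb{C}z$, apply the result to the smaller solvable algebra $\mathcal{L}/\mathbb{C}z$, and lift a resulting outer derivation back through the central extension.

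\textbf{Nilpotency of the radical.} Let $R=\operatorname{rad}(\mathcal{L})$ and suppose $R$ is not nilpotent. Then some $r\in R$ has $\operatorname{ad}(r)|_R$ non-nilpotent, so the Jordan decomposition of $\operatorname{ad}_{\mathcal{L}}(r)$ has a nonzero semisimple part $s$. Standard Jordan theory gives $s\in\operatorname{Der}(\mathcal{L})$, hence $s=\operatorname{ad}(y)$ for some $y\in\mathcal{L}$ by the hypothesis; such a $y$ is ad-semisimple and outside $Z(\mathcal{L})$. But every derivation of $\mathcal{L}$ must annihilate $Z(\mathcal{L})$, so $Z(\mathcal{L})$ lies in the zero weight space of $s$; a weight-space analysis of the action of $y$ on $R$, together with $[\mathcal{L},R]\subseteq N$ (nilradical), should then force $s=0$, the desired contradiction.

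\textbf{Expected obstacle.} The delicate point is the case $Z(\mathcal{L})\subseteq[\mathcal{L},\mathcal{L}]$ in the solvability argument: the naive ansatz $\phi(x)=f(x)z$ produces only derivations whose image already lies in $[\mathcal{L},\mathcal{L}]$, so these might a priori all be inner. The inductive lifting from $\mathcal{L}/\mathbb{C}z$ to $\mathcal{L}$ involves a cohomological obstruction in $H^1$ of the quotient with values in $\mathbb{C}z$; showing that this obstruction vanishes, or sidestepping it by a direct construction filtered by the upper central series, is the real technical core of the proof.
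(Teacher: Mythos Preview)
The paper does not prove this proposition: it is cited from Leger \cite{Leger1} in the preliminaries and stated without any argument, so there is no in-paper proof to compare your attempt against.

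Judged on its own, your proposal has two genuine gaps. In the non-solvability part, Case~2 ($Z(\mathcal{L})\subseteq[\mathcal{L},\mathcal{L}]$) is the entire difficulty, and your inductive plan via $\mathcal{L}/\mathbb{C}z$ does not go through as written: neither the hypothesis $\operatorname{Der}=\operatorname{InDer}$ nor the non-triviality of the center is inherited by the quotient, so you cannot invoke the inductive hypothesis there, and you supply no mechanism for lifting an outer derivation of $\mathcal{L}/\mathbb{C}z$ back to $\mathcal{L}$---this is precisely the obstruction you flag in your last paragraph but do not resolve. In the radical part, the observation that $[\mathcal{L},R]\subseteq N$ forces the nonzero $\operatorname{ad}(y)$-weight spaces of $R$ into the nilradical $N$ is correct but does not yield $s|_{R}=0$; it is perfectly consistent with $s$ having nonzero eigenvalues on $N\subseteq R$. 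Moreover, your sketch never actually uses the hypothesis $Z(\mathcal{L})\neq 0$ beyond the tautology that $Z$ sits in the zero weight space, which is a strong sign that the argument is not yet a proof.
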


Below we present the definition of a Lie superalgebra and related concepts.
\begin{defn} A  Lie superalgebra is a $\ZZ_2$-graded vector space  $\mathcal{L}=\mathcal{L}_{\bar 0} \oplus \mathcal{L}_{\bar 1}$ equipped with a bilinear bracket $[ \ , \ ]$, which is agreed with $\ZZ_2$-gradation and for arbitrary homogeneous elements $x, y, z$ satisfies the conditions

\begin{enumerate}
\item[1.] $[x,y]=-(-1)^{|x| |y|}[y,x],$

\item[2.]   $(-1)^{|x| |z|}[x,[y,z]]+(-1)^{|x| |y|}[y,[z,x]]+(-1)^{|y||z|}[z,[x,y]]=0$ {\it ( Jacobi superidentity).}
\end{enumerate}
\end{defn}

A superderivation $d$ of degree $|d|, |d|\in \ZZ_2,$ of a Lie superalgebra $\mathcal{L}$ is an endomorphism $d \in End(\mathcal{L})_{|d|}$ with the property
\begin{align}\label{der}
d([x,y])=[d(x),y] + (-1)^{|x| \cdot |d|}[x, d(y)].
\end{align}

If we denote by $Der(\mathcal{L})_{s}$ the space of all superderivations of degree $s$, then $Der (\mathcal{L})=Der(\mathcal{L})_0\oplus Der(\mathcal{L})_1$ forms the Lie superalgebra of superderivations of $\mathcal{L}$ with respect to the bracket $[d_i,d_j]=d_i\circ d_j - (-1)^{|d_i| |d_j|} d_j\circ d_i$, where $Der(\mathcal{L})_0$ consists of even superderivations (or just derivation) and $Der(\mathcal{L})_1$ of odd superderivations. For an element $x\in \mathcal{L}$ the operator $ad_x : \mathcal{L} \to \mathcal{L}$ defined by $ad_x(y)=[y,x]$ is a superderivation of the Lie superalgebra $\mathcal{L}$, which is called {\it inner superderivation}. The set of all inner superderivations is denoted by $\operatorname{InDer}(\mathcal{L})$ and it forms an ideal of the Lie superalgebra $Der(\mathcal{L})$.

Note that Engel's theorem remains to be valid for Lie superalgebras too (see \cite{scheunert}). But, nevertheless, neither analogue of Lie's theorem nor its corollaries for solvable Lie superalgebras are true, in general. However, it is known that the analogue of Lie's theorem for a solvable Lie superalgebra  $\mathcal{R}=\mathcal{R}_0\oplus \mathcal{R}_1$ is true if and only if $[\mathcal{R}_1,\mathcal{R}_1] \subseteq [\mathcal{R}_0,\mathcal{R}_0]$ (see Proposition 5.2.4 in \cite{Kac}). In addition, on nilpotency of the square of a solvable Lie superalgebra is not always true.

In the paper \cite{Wang2002}, the definition of torus of a Lie superalgebra is given in a similar way as for a Lie algebra. Since (in the case of algebraically closed field) the diagonalizability of odd superderivation leads to its nullity, one can assume that a torus on a complex Lie superalgebra $\mathcal{L}$ is an abelian subalgebra of the Lie algebra $Der(\mathcal{L})_0,$ consisting of diagonalizable endomorphisms.

\section{Maximal solvable extensions of some nilpotent Lie algebras}\label{sec3}

In this section, we provide the sufficient conditions under which maximal solvable extensions are unique.

Thanks the equality $[\operatorname{ad}_x,\operatorname{ad}_y]=\operatorname{ad}_{[x,y]}$ for all elements $x, y\in \mathcal{R}$, the solvability of $\mathcal{R}$ is equivalent to the solvability of $\operatorname{InDer}(\mathcal{R})$. Applying Lie's theorem to the algebra $\operatorname{InDer}(\mathcal{R})$, we conclude that there exists a basis of $\mathcal{R}$ such that all operators $\operatorname{ad}_{x}, x\in \mathcal{R},$ in the basis have upper-triangular matrix forms. In particular, all operators $\operatorname{ad}_{x|\mathcal{N}}, x \in \mathcal{Q}$, also admit upper-triangular matrix forms in the chosen basis of $\mathcal{N}$. Since $\mathcal{Q}$ is a vector space, the matrix entries of these operators are linear functionals on $\mathcal{Q}$.

The analysis of the existing results and the examples presented below provides insight into the restrictions that must be imposed on a nilpotent Lie algebra to ensure the uniqueness of its maximal solvable extension.

\begin{exam} \label{exam15} Let $\cal N_9=\operatorname{Span}\{e_1, \dots, e_9\}$ be nilpotent Lie algebra with multiplications table
$$\left\{\begin{array}{lllllll}
[e_{1}, e_{2}] = e_{3},&[e_{1}, e_{3}] = e_{4},&[e_{1}, e_{4}] = e_{5},&[e_{1}, e_{6}] = e_{7},&[e_{1}, e_{8}] = e_{9},\\[1mm]
[e_{2}, e_{3}] = e_{8},&[e_{2}, e_{4}] = e_{9},&[e_{2}, e_{5}] = e_{9},&[e_{4}, e_{3}] = e_{9}.
\end{array}\right.$$

Straightforward computations lead that all derivations of the algebra have upper-triangular matrix forms and $\cal T=Span\{t_{\alpha}, t_{\beta}\}$ with $t_{\alpha}=diag(0, 1, 1, 1, 1, 0, 0, 2, 2)$, $t_{\beta}=diag(0,0,0,0,0,1, 1, 0, 0)$ generates a maximal torus on $\cal N_9.$ One can check that solvable Lie algebras $\cal R_1=\cal N\rtimes T$ and $\cal R_2=\cal N\rtimes \widetilde{T}$ with $\widetilde{\cal T}=Span\{t_{\alpha}+d, t_{\beta}\},$ where $d\in Der(\cal N_9)$ defined as
$$d(e_2)=2 e_{3}-e_{4}, \ d(e_{3}) =2 e_{4}-e_{5}, \ d(e_4)=2 e_{5}, \ d(e_i)=0, \ i\neq 2, 3, 4$$
are non-isomorphic.

Note that alge`bra $\cal N_9$ satisfies the following properties: (i) diagonals of upper-triangular derivations generates a maximal tours of $\cal N_9$; (ii) the restriction and projection of $t_{\alpha}+d$ on $\cal N_{\alpha}$ is non-diagonal; (iii) $0 \in W$.
\end{exam}

Based on the structures of maximal solvable extensions illustrated in Gorbatsevich's example and Example \ref{exam15}, we introduce the notion of a $d$-locally diagonalizable nilpotent Lie algebra.

For a nilpotent Lie algebra $\cal N$, we fix a basis in which all nil-independent derivations are simultaneously represented by upper-triangular matrices. Further, we shall work exclusively with this basis; that is, all nil-independent derivations will be assumed a priori to have upper-triangular matrix representations.

\begin{defn} \label{defn3.2}  A nilpotent Lie algebra $\cal N$ is called $d$-locally diagonalizable, if it is satisfies three conditions:
\begin{itemize}
\item[(i)] the diagonals of all nil-independent derivations generate a maximal torus $\cal T$ on $\cal N$;
\item[(ii)] the restrictions and projections of all nil-independent derivations on each root subspaces with respect to the maximal torus $\cal T$ are diagonal.
\item[(iii)] $0 \notin W$.
\end{itemize}
\end{defn}
Taking into account the conjugacy of any two maximal tori, we conclude that the definition of $d$-locally diagonalizability is independent of the choice of torus and hence, the concept is well-defined.

Note that $d$-locally diagonalizable algebra is non-characteristically nilpotent algebra.

The following example shows that condition (ii) of Definition \ref{defn3.2} is not a consequence of condition (iii).
\begin{exam}\label{exam3.3} Consider the tensor product of the Lie algebra $\cal N_1$ and the associative-commutative algebra $\cal N_2$ defined by
$$\begin{array}{cccc}\cal N_1:\left\{\begin{array}{lllll}
 [x_1,x_i]=x_{i+1},&2\le i\le 7,&[x_2,x_3]=x_7,\\[2mm]
 [x_2,x_4]=x_8,&[x_2,x_5]=x_8,&[x_4,x_3]=x_8,
\end{array}\right.&\cal N_2:\left\{\begin{array}{lll}[y_1,y_2]=y_3,\\[2mm]
[y_2,y_1]=y_3.\end{array}\right.\end{array}$$

Let $\cal N_3$ denote the quotient algebra $(\cal N_1\otimes \cal N_2)/\cal J$, where the ideal $\cal J$ is generated by $\operatorname{Span}\{x_1\otimes y_3,\ x_2\otimes y_3,\ x_8\otimes y_1,\ x_8\otimes y_2\}$. Next, we construct the semidirect product $\cal N=\cal N_3\ltimes \cal N_4$, where $\cal N_4=\operatorname{Span}\{z_1, \dots, z_8\}$, whose products determined by the relations $[z_2,z_1]=z_{6}, \ [z_3,z_2]=z_7$ and the action of $\cal N_3$ on $\cal N_4$ is specified through the following products:
$$\left\{\begin{array}{lllllll}
[z_1,x_2\otimes y_1]=z_5,&[z_1,x_3\otimes y_1]=z_5,& [z_2,x_1\otimes y_1]=z_6,&[z_2,x_1\otimes y_2]=z_6,&[z_2,x_3\otimes y_1]=z_6,\\[2mm]
[x_2\otimes y_2,z_3]=z_7,&[z_3,x_3\otimes y_2]=z_7,&[z_{4},x_4\otimes y_2]=z_8,&[z_{4},x_7\otimes y_1]=z_8.\end{array}\right.$$

It can be shown that all derivations of $\cal N$ can be represented by upper-triangular matrices. Furthermore, conditions (i) and (iii) are satisfied, while condition (ii) fails to hold.
\end{exam}

For a given nilpotent Lie algebra $\mathcal{N}$ and its torus $\mathcal{T}$, we denote by $\cal R_{\mathcal{T}}$ the solvable Lie algebra $\mathcal{N}\rtimes \mathcal{T}$ with products $[\cal N, \cal T]=\cal T(\cal N).$

\begin{rem} \label{rem3.4}
\

\begin{itemize}
\item It should be noted that for any two maximal tori $\cal T$ and $\widetilde{\cal T}$ of $\cal N$, the corresponding solvable Lie algebras $\cal R_{\cal T}$ and $\cal R_{\widetilde{\cal T}}$ are isomorphic via an isomorphism $\psi$ satisfying
$$\psi_{|\cal N}:=\varphi, \ \psi(\cal T):=\varphi\circ \cal T \circ \varphi^{-1},$$
where $\varphi$ is an inner automorphism of $\cal N,$ conjugating the maximal tori.

\item For a maximal solvable extension of a nilpotent Lie algebra $\mathcal{N}$ satisfying condition~(i) of Definition~\ref{defn3.2}, one has
$\operatorname{rank}(\mathcal{N}) = \operatorname{codim} \mathcal{N}.$ In other words, in condition~(i) one may assume a maximal torus instead of an arbitrary torus. Indeed, let $\mathcal{R}=\cal N \oplus \cal Q$ and let $\{x_1, \dots, x_s\}$ be a basis of $\cal Q$. The existence of solvable Lie algebra $\cal R_{\cal T}$ implies that $s \geq \operatorname{rank}(\cal N).$ On the other hand, by the maximality of $\cal R$ and the condition (i), the diagonal parts of $\operatorname{ad}_{{x_i}|\mathcal{N}}, \ 1\leq i \leq s,$ generate a torus of $\cal N$. Hence, $s\leq  \operatorname{rank}(\cal N)$ and we get $\operatorname{rank}(\cal N)=\operatorname{dim}(\cal Q)$.
\end{itemize}
\end{rem}

The following example shows that for a maximal solvable extension of a non
$d$-locally diagonalizable nilpotent Lie algebra, the equality
$\operatorname{rank}(\cal N)=\operatorname{codim} \cal N$ does not hold, in general.
\begin{exam} \label{exam3.5} One can show that in an appropriate basis of the algebra $\cal N_3$ from Example \ref{exam3.3}, all derivations have upper-triangular matrix forms. Hence, the following derivations constitute a maximal set of nil-independent derivations of $\cal N_3$:
$$d_1=\sum\limits_{i\in \cal I_{1}}E_{i,i}-\sum\limits_{i\in \cal J_{1}}E_{i,i}-E_{17,18}-E_{7,9}-E_{8,10},\quad d_2=\sum\limits_{i\in \cal I_3}E_{i,i}+E_{7,9}+E_{8,10}+E_{17,18},\ $$
$$d_3=\sum\limits_{i\in \cal I_{2}}E_{i,i}-\sum\limits_{i\in \cal J_{2}}E_{i,i},\quad  d_4=\sum\limits_{i\in \cal I_4}E_{i,i},\quad   d_5=\sum\limits_{i\in\cal I_5}E_{i,i},$$

where
$$\cal  I_1=\{3,4,11,12,15,19\},\ \ \cal I_{2}=\{1,11,12,13,14\},\ \ \cal J_1=\{9,10,18\}, \ \ \cal J_{2}=\{4,6,8,10,12,14,19,20\},$$
$$\cal I_3=\cal J_1\cup\{5,6,11,12,13,14,16,19,20\},\ \ \cal I_4=\cal J_1\cup\{7,8,13,14,17,20\},\ \ \cal I_5=
\cal J_2\cup \{2,15,16,17,18\}.$$


The maximal solvable extension of $\cal N_3$ with complementary subspace $\cal Q=\{z_1,z_2,z_3,z_4,z_5\}$, acting on $\cal N_3$ via ${\operatorname{ad}_{z_i}}_{|\cal N}=d_i, \ 1\le i\le 5,$ and satisfying $[\cal Q, \cal Q]=0$, has $\operatorname{codim}\cal N_3=5$. Meanwhile,  $\operatorname{Span}\{d_1+d_2, d_3,d_4,d_5\}$ forms a four-dimensional maximal torus. Consequently, $\operatorname{codim} \cal N_3 > \operatorname{rank}(\cal N)$.

Note that conditions (i) and (ii) of Definition \ref{defn3.2} do not hold, whereas condition (iii) holds true.
\end{exam}

Now we are going to establish that a maximal solvable extension of a $d$-locally diagonalizable nilpotent Lie algebra is unique up to isomorphism.

Let $\mathcal{N}$ be a $d$-locally diagonalizable nilpotent Lie algebra and let $\mathcal{R} = \mathcal{N} \oplus \mathcal{Q}$ be a maximal solvable extension of $\mathcal{N}$. We fix a basis of $\mathcal{N}$ in which all nil-independent derivations are represented by upper-triangular matrices. Setting
$\mathcal{Q} = \operatorname{Span}\{x_1, \dots, x_s\}$, we obtain a root space decomposition $\mathcal{N} = \mathcal{N}_{\alpha_1} \oplus \mathcal{N}_{\alpha_2} \oplus \cdots \oplus \mathcal{N}_{\alpha_n}$
with respect to a maximal torus $\mathcal{T}$, which is generated by the diagonal parts of $\operatorname{ad}_{x_i|\mathcal{N}}$, $1 \le i \le s$. Therefore, one may assume that the following products hold:
\begin{equation}\label{eq11}
\begin{array}{lllll}
[n_{\alpha_{i}},x_i]&=&\alpha_{i,i}n_{\alpha_{i}}+
\sum\limits_{t=i+1}^{s}m_{i,i}^{t}+
\sum\limits_{\alpha\in W\setminus \Psi_1}(*),& 1\leq i \leq s;\\[1mm]
[n_{\alpha_{i}},x_j]&=&
\sum\limits_{t=i+1}^sm_{i,j}^{t}+
\sum\limits_{\alpha\in W\setminus \Psi_1}(*),& 1\leq i\neq j \leq s,\\[1mm]
\end{array}
\end{equation}
where $n_{\alpha_{i}}\in \cal N_{\alpha_i}$,  $m_{*,*}^{t}\in \cal N_{\alpha_t}$ and $(*)$ denote the terms of $\cal N_{\alpha}$ with $\alpha\in W\setminus \Psi_1.$

For convenience, we consider the product of inner automorphisms of the following forms:
\begin{equation}\label{eq3.4.1}
n_{\alpha_i}'= exp(d_{i,r})\circ\dots\circ exp(d_{i,1})(n_{\alpha_i}),
\end{equation}
where
$$\begin{array}{lllll}
d_{i,p}=A^{-1}(ad_{ x_l|\cal V_{p-1}}-t_l) \ \mbox{with operator} \ t_l \ \mbox{as in} \ \eqref{eq3.1.1} \ \mbox{and} \\[3mm]
\cal V_0= \cal N, \quad \cal V_{p-1}=exp(d_{i,p-1})(\cal N), \quad  2\le p\le r.\\[3mm]
\end{array}$$

\begin{lem} \label{lem1.8} There exists a basis of $\cal R$ such that the following products hold true:

$$\begin{array}{lllll}
i)& [n_{\alpha_{i}},x_i]&=&\alpha_{i,i}n_{\alpha_{i}}+\sum\limits_{\alpha\in W\setminus \Psi_1}(*), &1\leq i \leq s;\\[1mm]
ii)& [n_{\alpha_{i}},x_j]&=&\sum\limits_{\alpha\in W\setminus \Psi_1}(*),& 1\leq i\neq j\leq s;\\[1mm]
iii) &[x_i,x_j]&=&\sum\limits_{\alpha\in W\setminus \Psi_1}(*),&1\leq i, j \leq s.
\end{array}$$
\end{lem}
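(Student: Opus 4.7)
The plan is to prove the three claims by two successive basis modifications of $\cal R$: first, an inner automorphism of $\cal N$ (extended identically on $\cal Q$) brings the brackets $[n_{\alpha_i}, x_j]$ into forms (i) and (ii); then a translation of each $x_i$ by a suitable element of $\cal N$ eliminates the primitive components of $[x_i,x_j]$ and yields (iii).

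For step 1 (establishing (i) and (ii)), starting from the general form \eqref{eq11}, the plan is to iteratively eliminate the primitive off-diagonal terms $m_{i,j}^t\in \cal N_{\alpha_t}$ (with $t>i$) by applying inner automorphisms of type \eqref{eq3.4.1}. Condition~(i) of $d$-local diagonalizability ensures that $t_l$ is the diagonal (semisimple) part of $\operatorname{ad}_{x_l|\cal N}$, so the derivation $d_{i,p}=A^{-1}(\operatorname{ad}_{x_l|\cal V_{p-1}}-t_l)$ is nilpotent and $\exp(d_{i,p})$ is a well-defined unipotent automorphism. The normalizing scalar $A^{-1}$ is chosen (using $\alpha_{l,l}\neq 0$) so that the leading correction produced by the exponential cancels the lowest remaining off-diagonal term $m_{i,j}^t$, while condition~(ii) of $d$-local diagonalizability guarantees that this operation does not introduce new primitive off-diagonals at any other level. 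Iterating through $p=1,\dots,r$ and across all relevant index pairs removes every $m_{i,j}^t$ and yields (i) and (ii).

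For step 2 (establishing (iii)), note first that $[x_i,x_j]\in \cal N$ since the derived subalgebra of a solvable Lie algebra is nilpotent. Decomposing along root spaces, write $[x_i,x_j]=\sum_{k=1}^s c_{ij}^k u_k+(\text{non-primitive})$ with $u_k\in \cal N_{\alpha_k}$. The next step is to replace each $x_i$ by $x_i'=x_i+y_i$ with $y_i$ in the primitive part of $\cal N$; since the bracket of two primitive root elements lies in a non-primitive root subspace, such an adjustment preserves (i) and (ii). A direct computation shows that choosing $y_j^i\in \cal N_{\alpha_i}$ and $y_i^j\in \cal N_{\alpha_j}$ appropriately kills the $\cal N_{\alpha_i}$- and $\cal N_{\alpha_j}$-components of $[x_i',x_j']$. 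For the remaining components in $\cal N_{\alpha_k}$ with $k\notin\{i,j\}$, the plan is to apply the Jacobi identity to the triple $(x_i,x_j,x_k)$, expand the primitive projections using (i) and (ii), and use $0\notin W$ to deduce $c_{ij}^k=0$.

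The main obstacle lies in the Jacobi step for indices $k\notin\{i,j\}$: the bracket $[x_l,v]$ for non-primitive $v\in \cal N_\beta$ can, even after step 1, produce primitive contributions via the off-diagonal part $N_l=\operatorname{ad}_{x_l|\cal N}-t_l$, since condition~(ii) of Definition~\ref{defn3.2} does not restrict $\operatorname{ad}_{x_l}$ on non-primitive root subspaces. Carefully tracking these cross-terms and invoking $0\notin W$ to rule out a zero-root obstruction is the technical heart of the argument; if unabsorbed residual primitive terms remain after step 2, one may iterate steps 1 and 2 to consume them.
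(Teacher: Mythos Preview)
Your plan contains the right ingredients, but you have overlooked a structural fact already built into the setup, and as a result your route is more convoluted than the paper's. In the fixed basis all $\operatorname{ad}_{x_l|\cal N}$ are upper-triangular, and the form \eqref{eq11} records that the primitive root vectors occupy the leading positions in the ordering; consequently $\operatorname{ad}_{x_l}$ can never carry a non-primitive root vector back into the primitive block, so $[v,x_l]$ is automatically non-primitive whenever $v$ is. This single observation dissolves your ``main obstacle'' completely and makes any iteration unnecessary. It also means that neither condition~(ii) nor condition~(iii) of Definition~\ref{defn3.2} is used in this lemma; the paper invokes those only in the subsequent lemmas.

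With that in hand, the paper's argument diverges from yours in parts (ii) and (iii). For (ii) no further inner automorphisms are used: once (i) holds, the Jacobi identity $[n_{\alpha_i},[x_i,x_j]]=[[n_{\alpha_i},x_i],x_j]-[[n_{\alpha_i},x_j],x_i]$ reduces, modulo non-primitive terms, to $\alpha_{i,i}[n_{\alpha_i},x_j]\equiv[[n_{\alpha_i},x_j],x_i]$, and a short induction on the index of the lowest surviving primitive component forces every $m_{i,j}^t=0$. Your appeal to condition~(ii) of Definition~\ref{defn3.2} here is misplaced: that condition constrains only the intra-root-space diagonal block, not the inter-root off-diagonals $m_{i,j}^t$ you are trying to kill. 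For (iii) the paper reverses your order: Jacobi on $\{x_k,x_i,x_j\}$ with pairwise distinct indices, combined with the freshly proved (i)--(ii) and $\alpha_{k,k}\neq 0$, shows that the primitive part of $[x_i,x_j]$ already lies in $\cal N_{\alpha_i}\oplus\cal N_{\alpha_j}$; then the single translation $x_i'=x_i-\sum_{t\neq i}\theta_{i,t}^t$ removes those two remaining components. No use of $0\notin W$ is required.
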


\begin{proof} Part {\it i}) is obtained by applying the change of \eqref{eq3.4.1} with $A=\alpha_{i,i}, \ l=i$ and $r=s-i.$

Part {\it ii}). Since $[\cal R, \cal R]\subseteq \cal N$, we have $[\cal Q, \cal Q]\subseteq \cal N$ and hence, we conclude that $[n_{\alpha_{i}},[x_i,x_j]]=\sum\limits_{\alpha\in W\setminus \Psi_1}(*).$ Therefore, applying {\it i}) and the products of \eqref{eq11} in the following chain of equalities with $1\leq i\neq j\leq s:$
$$[n_{\alpha_{i}},[x_i,x_j]]=
[[n_{\alpha_{i}},x_i],x_j]-[[n_{\alpha_{i}},x_j],x_i]=\alpha_{i,i}
[n_{\alpha_{i}}, x_j]-[[n_{\alpha_{i}},x_j],x_i]+\sum\limits_{\alpha\in W\setminus \Psi_1}(*)$$
we deduce $[n_{\alpha_{i}}, x_j]=\sum\limits_{\alpha\in W\setminus \Psi_1}(*).$

Part {\it iii}). Applying {\it ii}) in the equality
$[x_k, [x_i,x_j]]=[[x_k, x_i],x_j] - [[x_k, x_j],x_i]$
with pairwise non equal indexes $k, i, j$ from the set $\{1, \dots, s\},$ we derive

$$[x_i,x_j]=\theta_{i,j}^i+\theta_{i,j}^j+\sum\limits_{\alpha\in W\setminus \Psi_1}(*)$$ with $\theta_{i,j}^i\in \cal N_{\alpha_i}, \theta_{i,j}^j\in \cal N_{\alpha_j}.$

Now setting $x_i'=x_i-\sum\limits_{t=1, t\neq i}^{s}\theta_{i,t}^t ,\  1\leq i\leq s,$
we obtain $[x_i',x_j']=\sum\limits_{\alpha\in W\setminus \Psi_1}(*)e_{\alpha}$.
\end{proof}

Consider now the obtained products modulo $\cal N^2$ in $\cal R$

\begin{equation}\label{eq12}
\left\{\begin{array}{llllll}
[n_{\alpha_{i}},x_i]&\equiv& \alpha_{i,i}n_{\alpha_{i}}+\sum\limits_{t=s+1}^{k}m_{i,i}^t, & 1\leq i\leq s,\\[1mm]
[n_{\alpha_{i}},x_j]&\equiv& \sum\limits_{t=s+1}^{k}m_{i,j}^t, & 1\leq i\neq j \leq s,\\[1mm]
[n_{\alpha_{i}},x_j]&\equiv& \alpha_{i,j}n_{\alpha_i}+\sum\limits_{t=i+1}^{k}m_{i,j}^t, & s+1\leq i\leq k, \ 1\leq j \leq s,\\[1mm]
[x_i,x_j]&\equiv &\sum\limits_{t=s+1}^{k}v_{i,j}^t, & 1\leq i, j \leq s,\\[1mm]
\end{array}\right.
\end{equation}
where $\alpha_{i,j}=\alpha_{i}(x_j).$

Let us introduce denotation

$$\prod\limits^h(i,t)=\prod\limits_{q=1}^{h}
\delta_{\alpha_{i,q},\alpha_{t,q}}, \quad \mbox{with} \quad
\delta_{\alpha_{i,q},\alpha_{t,q}}=
\left\{\begin{array}{llll}
1, \quad if \quad \alpha_{i,q}=\alpha_{t,q}\\[1mm]
0, \quad otherwise
\end{array}.\right.$$

The product property leads to the equality $\prod\limits^h(i,t)\prod\limits^h(p,q)=\prod\limits^h(i,t)(p,q).$

In the next lemma we get the exact expressions for the products $[n_{\alpha_{i}},x_j]$ modulo $N^2$.

\begin{lem}\label{lem1.9} There exists a basis of $\cal R$ such that the following products modulo $\cal N^2$ hold true:

\begin{equation}\label{eq13}
[n_{\alpha_{i}},x_j]\equiv \alpha_{i,j}n_{\alpha_i}, \quad s+1\leq i\leq k, \quad  1\leq j \leq s.
\end{equation}
\end{lem}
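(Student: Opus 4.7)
My plan is to apply the Jacobi identity on $\mathcal{R}$ to triples $(n_{\alpha_i}, x_j, x_l)$ in order to extract a compatibility relation on the coefficients $m_{i,j}^t$ of \eqref{eq12}, and then to absorb these coefficients by a downward inductive change of basis in the non-primitive generator slots, in the same spirit as \eqref{eq3.4.1} and Lemma \ref{lem1.8}.

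For fixed $i$ with $s+1 \le i \le k$ and $j, l \in \{1, \ldots, s\}$, I apply Jacobi to $(n_{\alpha_i}, x_j, x_l)$. Since $[x_j, x_l] \in \mathcal{N}$ (from $[\mathcal{R}, \mathcal{R}] \subseteq \mathcal{N}$) and $n_{\alpha_i} \in \mathcal{N}$, the bracket $[n_{\alpha_i}, [x_j, x_l]]$ lies in $\mathcal{N}^2$, so modulo $\mathcal{N}^2$ Jacobi collapses to
\[
[[n_{\alpha_i}, x_j], x_l] \equiv [[n_{\alpha_i}, x_l], x_j] \pmod{\mathcal{N}^2}.
\]
Expanding both sides using \eqref{eq12} and writing $m_{i,j}^t = \mu_{i,j}^t\, n_{\alpha_t}$ (the higher-dimensional case being treated component-wise), the coefficient of $n_{\alpha_t}$ for each $t > i$ produces
\[
\mu_{i,l}^t(\alpha_{i,j}-\alpha_{t,j}) = \mu_{i,j}^t(\alpha_{i,l}-\alpha_{t,l})
\]
up to cross terms of the form $\mu_{i,*}^{t'}\mu_{t',*}^{t}$ with $i < t' < t$. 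Since $t' > i \ge s+1$, these cross terms are cleared at each stage by a downward inductive hypothesis ensuring $\mu_{t',*}^{t} = 0$ for every $t' > i$.

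Because $\mathcal{N} = \bigoplus \mathcal{N}_{\alpha}$ is a decomposition into distinct root spaces, $\alpha_i \ne \alpha_t$ as functionals on $\mathcal{T}$, so some index $j_0$ satisfies $\alpha_{i,j_0} \ne \alpha_{t,j_0}$. The identity then forces $\mu_{i,j}^t = 0$ whenever $\alpha_{i,j} = \alpha_{t,j}$, and shows that
\[
c_{i,t} := \frac{\mu_{i,j}^t}{\alpha_{i,j} - \alpha_{t,j}}
\]
is independent of the admissible $j$, hence well-defined. Running a downward induction on $i$ from $k$ to $s+1$ (the base case $i=k$ being trivial, as the sum is empty), I perform the substitution $n_{\alpha_i}' := n_{\alpha_i} + \sum_{t > i} c_{i,t}\, n_{\alpha_t}$; a direct expansion using the inductive assumption $[n_{\alpha_{i'}}, x_j] \equiv \alpha_{i',j}\, n_{\alpha_{i'}}$ for $i' > i$ verifies that $[n_{\alpha_i}', x_j] \equiv \alpha_{i,j}\, n_{\alpha_i}' \pmod{\mathcal{N}^2}$, as required. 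This transformation is realized by a composition of exponentials of strictly upper-triangular derivations, in line with \eqref{eq3.4.1}.

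The main obstacle is the well-definedness of the scalar $c_{i,t}$: a priori there are $s$ values of $j$ yielding separate equations for one unknown, so the system is overdetermined. The essential content of the proof is that Jacobi, combined with conditions (i) and (ii) of Definition \ref{defn3.2}---the former guaranteeing that distinct roots are separated by the tuples $(\alpha_{i,1}, \ldots, \alpha_{i,s})$, and the latter guaranteeing that the $\mathcal{N}_{\alpha_i}$-component of $[n_{\alpha_i}, x_j]$ is exactly the scalar $\alpha_{i,j}n_{\alpha_i}$---is precisely what forces this overdetermined system to be compatible.
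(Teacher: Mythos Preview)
Your argument is correct and follows the same Jacobi-plus-basis-change strategy as the paper, but with a different and tidier induction scheme. The paper inducts forward on a parameter $h\in\{1,\dots,s\}$: at each step it applies the automorphism \eqref{eq3.4.1} with $l=h$ to strip off those $m_{i,j}^t$ for which $\alpha_{i,h}\neq\alpha_{t,h}$, and then runs a fairly intricate comparison of the $\mathcal{N}_{\alpha_{i+r}}$-components (the blocks labeled $A(r)$, $B(r)$, $C(t)$) to control the surviving pieces. Your downward induction on $i$ short-circuits this bookkeeping: the hypothesis $\mu_{t',*}^{t}=0$ for $t'>i$ kills the cross terms immediately, so the compatibility relation $(\alpha_{i,j}-\alpha_{t,j})\,\mu_{i,l}^t=(\alpha_{i,l}-\alpha_{t,l})\,\mu_{i,j}^t$ falls out directly and a single substitution handles all columns $j$ at once. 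Both routes ultimately exploit the same fact---that distinct roots are separated by some coordinate $j_0$---but yours packages it more economically.

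One remark: your closing assertion that the substitution $n_{\alpha_i}'=n_{\alpha_i}+\sum_{t>i}c_{i,t}\,n_{\alpha_t}$ is ``realized by a composition of exponentials of strictly upper-triangular derivations, in line with \eqref{eq3.4.1}'' is stated but not justified. The maps in \eqref{eq3.4.1} are exponentials of genuine nilpotent derivations of $\mathcal{N}$ (the nilpotent parts of $ad_{x_l|\mathcal{N}}$), hence automorphisms of $\mathcal{N}$; your substitution, which alters only the single vector $n_{\alpha_i}$, is not obviously of that form. This does not affect the present lemma, which only asks for \emph{some} basis of $\mathcal{R}$. However, in the proof of Theorem~\ref{mainthm} the paper relies on all basis changes being automorphisms of $\mathcal{N}$ so that the multiplication table of the nilradical is preserved verbatim. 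If you intend to carry your argument through to that theorem, either verify the claim or observe that a non-automorphic linear change of basis still yields an isomorphic copy of $\mathcal{N}$, and then invoke conjugacy of maximal tori at the end.
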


\begin{proof} For $s+1\leq i\leq k $ by induction on $h$ we prove the following products:
\begin{equation}\label{eq14}
[n_{\alpha_i},x_j]\equiv \alpha_{i,j}n_{\alpha_i}, 1\leq j\leq h-1, \quad
[n_{\alpha_i},x_j]\equiv\alpha_{i,j}n_{\alpha_i}+\sum\limits_{t=i+1}^{k}
\prod\limits^{h-1}(i,t)m_{i,j}^t, \quad  h\leq j \leq s.
\end{equation}

Thanks to \eqref{eq12} the base of induction is obvious. Assume that \eqref{eq14} is true for $h-1.$ In the case of $\alpha_{i,h}\neq \alpha_{t,h},\ (i+1\le t\le k)$ we use the change of basis \eqref{eq3.4.1} assuming $A=\alpha_{i,h}-\alpha_{i+p,h}, l=h$ and $r=k-i.$ Then we derive
$[n_{\alpha_i}',x_{h}]\equiv\alpha_{i,h}n_{\alpha_i}'.$ Consequently, one can assume that

$$[n_{\alpha_i},x_{h}] \equiv\alpha_{i,h}n_{\alpha_i}+
\sum\limits_{t=i+1}^{k}\prod\limits^{h}(i,t)m_{i,h}^t, \quad  s+1\leq i\leq k.$$

Consider the following congruences with $s+1\leq i\leq k$:

$$\begin{array}{lllll}
0&\equiv & &[n_{\alpha_i},[x_j,x_{h}]]{\equiv}[[n_{\alpha_i},x_j],x_{h}]-
[[n_{\alpha_i},x_{h}],x_j]\\[3mm]
&\equiv&&\alpha_{i,j}\sum\limits_{t=i+1}^{k}
\prod\limits^{h}(i,t)m_{i,h}^t+\sum\limits_{t=i+1}^{k}
\prod\limits^{h-1}(i,t)\Big(\alpha_{t,h}m_{i,j}^t+\sum\limits_{p=t+1}^{k}
\prod\limits^{h}(t,p)\nu_{t,h}^p(m_{i,j}^t)\Big)\\[3mm]
&&-&\alpha_{i,h}\sum\limits_{t=i+1}^{k}
\prod\limits^{h-1}(i,t)m_{i,j}^t-\sum\limits_{t=i+1}^{k}
\prod\limits^{h}(i,t)\Big(\alpha_{t,j}m_{i,{h}}^t+\sum\limits_{p=t+1}^{k}
\prod\limits^{h-1}(t,p)\mu_{t,j}^p(m_{i,{h}}^t)\Big)\\[3mm]
&\equiv&&\sum\limits_{t=i+1}^{k}\Big(\prod\limits^{h}(i,t)
(\alpha_{i,j}-\alpha_{t,j})m_{i,h}^t+
\prod\limits^{h-1}(i,t)(\alpha_{t,h}-\alpha_{i,h})m_{i,j}^t\Big)\\[3mm]
&& -&\sum\limits_{t=i+1}^{k-1}\sum\limits_{p=t+1}^{k}
\prod\limits^{h-1}(i,t)\prod\limits^{h}(t,p)\nu_{t,h}^p(m_{i,j}^t)-
\sum\limits_{t=i+1}^{k-1}\sum\limits_{p=t+1}^{k}
\prod\limits^{h-1}(i,t)(t,p)\delta_{\alpha_{i,h},\alpha_{t,h}}
\mu_{t,j}^p(m_{i,{h}}^t)\\[3mm]
&\equiv& &\begin{array}{cc}\sum\limits_{r=1}^{k-i}\underbrace{\Big(
\prod\limits^{h}(i,i+r)(\alpha_{i,j}-\alpha_{i+r,j})
m_{i,h}^{i+r}+\prod\limits^{h-1}(i,i+r)(\alpha_{i+r,h}-
\alpha_{i,h})m_{i,j}^{i+r}\Big)}\\[3mm]
A(r)\\[3mm]
\end{array}\\[3mm]
&&+&\begin{array}{cc}\sum\limits_{r=1}^{k-i-1}\underbrace{
\sum\limits_{t={i+1}}^{k-r}\prod\limits^{h-1}(i,t)
\prod\limits^{h}(t,t+r)\nu_{{t},h}^{t+r}(m_{i,j}^t)} \ -\\[3mm]
B(r)\\[3mm]
\end{array}
\begin{array}{cc}
\sum\limits_{t=i+1}^{k-1}\underbrace{\sum\limits_{p=t+1}^{k}
\prod\limits^{h}(i,t)\prod\limits^{h-1}(t,p)\mu_{t,j}^p(m_{i,{h}}^t)}.\\[3mm]
C(t)\\[3mm]
\end{array}\end{array}$$

{\bf Step 1.} Comparing elements contained in $\cal N_{\alpha_{i+1}}$, we obtain $A(1)=0$. It implies
$$\begin{array}{ccccccc}\prod\limits^{h}(i,i+1)(\alpha_{i,j}-\alpha_{i+1,j})m_{i,h}^{i+1}&=&0, &
\prod\limits^{h-1}(i,i+1)(\alpha_{i+1,h}-\alpha_{i,h})m_{i,j}^{i+1}&=&0.\end{array}$$

Since $\alpha_i\neq \alpha_{i+1}$, the above equalities deduce
\begin{equation}\label{eq15}
\prod\limits^{h}(i,i+1)m_{i,h}^{i+1}=0, \quad
\prod\limits^{h-1}(i,i+1)m_{i,j}^{i+1}=
\prod\limits^{h}(i,i+1)m_{i,j}^{i+1} \ \mbox{for any} \ s+1 \leq i \leq k-1.
\end{equation}

Due to arbitrariness of $i$, equalities \eqref{eq15} can be written in general form

\begin{equation}\label{eq16}
\prod\limits^{h}(t,t+1)m_{t,h}^{t+1}=0, \quad
\prod\limits^{h-1}(t,t+1)m_{t,j}^{t+1}=
\prod\limits^{h}(t,t+1)m_{t,j}^{t+1} \ \mbox{for any} \ t\in \{i, \dots,k-1\}.
\end{equation}

According to the arbitrariness of choice $n_{\alpha_i}$ and relations \eqref{eq16} we derive $B(1)=0$.

Taking into account \eqref{eq15} and

$$C(i+1)=\prod\limits^{h}(i,i+1)[m_{i,h}^{i+1},x_j]-
\prod\limits^{h}(i,i+1)\alpha_{t,j}m_{i,h}^{i+1},$$

we conclude $C(i+1)=0$.

{\bf Step 2.} The comparison of elements that lie in $\cal N_{\alpha_{i+2}}$  leads to $A(2)=0$. Now applying the same arguments as in Step 1, we get
$B(2)=C(i+2)=0$.

Continuing in a such way, finally, we obtain $A(r)=0$ for any $r\in\{1, \dots, k-i\}$. Consequently, we get
$$\begin{array}{ccccccc}
\prod\limits^{h}(i,i+r)(\alpha_{i,j}-\alpha_{i+r,j})
m_{i,h}^{i+r}&\equiv&0, &
\prod\limits^{h-1}(i,i+r)(\alpha_{i+r,h}-\alpha_{i,h})m_{i,j}^{i+r}&\equiv&0.\end{array}$$

Since $\alpha_{i}\neq \alpha_{i+r},$ it implies

$$\begin{array}{ccccccc}\prod\limits^{h}(i,i+r)m_{i,h}^{i+r}
&\equiv&0,&\prod\limits^{h-1}(i,i+r)m_{i,j}^{i+r}
&\equiv&\prod\limits^{h}(i,i+r)m_{i,j}^{i+r}.\end{array}$$

Thus, \eqref{eq14} is proved. Assuming in \eqref{eq14} $h=s+1$, we obtain \eqref{eq13}.
\end{proof}


In the next lemma we give the exact expressions for the products $[n_{\alpha_{i}},x_j],\ 1\le i\neq j \le s$ modulo $N^2$.

\begin{lem}\label{lem7.1} There exists a basis of $\cal R$ such that the following products modulo $\cal N^2$ hold true:

\begin{equation}\label{eq13.1}
\begin{array}{lll}
[n_{\alpha_i},x_j]\equiv 0,&1\leq i\neq j\leq s.\\[1mm]
\end{array}
\end{equation}
\end{lem}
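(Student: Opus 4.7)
The proof is a direct adaptation of the inductive scheme used in Lemma~\ref{lem1.9}, now applied to the primitive-to-primitive products. After Lemma~\ref{lem1.8} and Lemma~\ref{lem1.9}, the congruences \eqref{eq12} read
$$[n_{\alpha_i},x_j]\equiv \sum_{t=s+1}^{k} m_{i,j}^{t}\pmod{\mathcal{N}^2},\qquad 1\le i\neq j\le s,$$
and the goal is to eliminate every $m_{i,j}^{t}$ by an appropriate basis transformation.

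First I would extract linear relations among the $m_{i,j}^t$ from the Jacobi identity. For pairwise distinct $i,j,h\in\{1,\dots,s\}$, the identity $[n_{\alpha_i},[x_j,x_h]]=[[n_{\alpha_i},x_j],x_h]-[[n_{\alpha_i},x_h],x_j]$ has vanishing left-hand side modulo $\mathcal{N}^2$, since $[x_j,x_h]\in\mathcal{N}$ and $[n_{\alpha_i},\mathcal{N}]\subseteq\mathcal{N}^2$. Applying \eqref{eq13} from Lemma~\ref{lem1.9} to the right-hand side (each $m_{i,*}^t$ lies in $\mathcal{N}_{\alpha_t}$ with $t\ge s+1$, so brackets with $x_l$ act diagonally) and comparing components in each root space $\mathcal{N}_{\alpha_t}$, we obtain the relations
$$\alpha_{t,h}\,m_{i,j}^t \equiv \alpha_{t,j}\,m_{i,h}^t \pmod{\mathcal{N}^2},\qquad s+1\le t\le k.$$
The triple $\{n_{\alpha_i},x_i,x_j\}$ yields in parallel $(\alpha_{i,i}-\alpha_{t,i})\,m_{i,j}^t+\alpha_{t,j}\,m_{i,i}^t\equiv 0 \pmod{\mathcal{N}^2}$.

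Next I would perform successive basis transformations of the form \eqref{eq3.4.1}, each targeting one of the surviving $m_{i,j}^t$ with $A=\alpha_{i,l}-\alpha_{t,l}\neq 0$ for a suitable $l\in\{1,\dots,s\}$. Since $\alpha_i\neq\alpha_t$ for $i\le s<t$, and by condition (iii) of Definition~\ref{defn3.2} neither is the zero functional, the two roots must differ on at least one of the generators $x_1,\dots,x_s$ of $\mathcal{T}$, so such an $l$ exists. Exponentiating $d_{i,p}=A^{-1}(\operatorname{ad}_{x_l|\mathcal{V}_{p-1}}-t_l)$ absorbs the relevant $m_{i,j}^t$ into $n_{\alpha_i}'$ modulo $\mathcal{N}^2$, without disturbing the normalizations previously achieved in Lemma~\ref{lem1.8} and Lemma~\ref{lem1.9}. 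Inducting on the counter $h$ and tracking the surviving corrections with the indicator $\prod^h(i,t)$ exactly as in Lemma~\ref{lem1.9}, the procedure terminates in at most $s$ steps and yields $m_{i,j}^t\equiv 0\pmod{\mathcal{N}^2}$ for all $1\le i\neq j\le s$ and all $s+1\le t\le k$.

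The main obstacle is the combinatorial bookkeeping: one must verify that each basis change kills its targeted coefficient without resurrecting any $m_{i,j}^t$ already eliminated at a previous stage, and that the companion relations involving $m_{i,i}^t$ (coming from the $\{n_{\alpha_i},x_i,x_j\}$ triple) remain compatible with the successive eliminations. This amounts to the same step-by-step case analysis performed in the proof of Lemma~\ref{lem1.9}, using the multiplicativity $\prod^h(i,t)\prod^h(p,q)=\prod^h(i,t)(p,q)$ to separate contributions layer by layer; the argument is lengthy but mechanical once the framework of the previous lemma is in place.
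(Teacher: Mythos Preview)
Your approach is essentially the same as the paper's: Jacobi identity plus successive inner-automorphism basis changes of type~\eqref{eq3.4.1}, organized as an induction on a counter $h\in\{1,\dots,s\}$. Two points deserve correction, however.

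First, the indicator that actually tracks the surviving corrections is $\prod_{q=1}^{h-1}\delta_{\alpha_{t,q},0}$, not $\prod^{h}(i,t)$. Since $\alpha_{i,q}=0$ for primitive $i\le s$ with $q\ne i$, the two agree on every factor except $q=i$, but the former depends only on $t$ and is precisely what the basis change at stage $h$ (with $l=h$ and $A=-\alpha_{t,h}$) produces. The paper's induction statement is that $[n_{\alpha_i},x_j]\equiv 0$ for $j\le h-1$ and $[n_{\alpha_i},x_j]\equiv\sum_{t}\prod_{q=1}^{h-1}\delta_{\alpha_{t,q},0}\,m_{i,j}^t$ for $j\ge h$.

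Second, your invocation of condition~(iii) is misplaced. The existence of an $l$ with $\alpha_{i,l}\ne\alpha_{t,l}$ follows from $\alpha_i\ne\alpha_t$ alone; condition~(iii) is not needed there. Where condition~(iii) is genuinely required is at the termination step: one must know that $\prod_{q=1}^{s}\delta_{\alpha_{t,q},0}=0$ for every $t\ge s+1$, which is exactly the statement that $\alpha_t\ne 0$. This is precisely how the paper uses it.

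Finally, the relation you extract from the triple $\{n_{\alpha_i},x_i,x_j\}$ is not needed for this lemma; it is the starting point of Lemma~\ref{lem1.10}.
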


\begin{proof} In order to obtain the products \eqref{eq13.1} we shall use arguments similar as in the proof of the previous lemma. Namely, by induction on $h$ we prove the following products

\begin{equation}\label{eq19.1}
[n_{\alpha_i},x_j]\equiv 0, \quad 1\leq j\leq h-1, \quad
[n_{\alpha_i},x_j]\equiv\sum\limits_{t=s+1}^{k}\prod\limits_{q=1}
^{h-1}\delta_{\alpha_{t,q},0}m_{i,j}^t,\quad h\leq j \leq s.
\end{equation}

Thanks to \eqref{eq12} the base of induction is obvious. Assuming that \eqref{eq19.1} is true for $h-1$, in the case of $\alpha_{t,h}\neq0,\ (s+1\le p\le k)$ the application of \eqref{eq3.4.1} with $A=-\alpha_{s+p,h}, \ l=h, \ r=k-s$  leads to $[n_{\alpha_i}',x_{h}]=0.$ Consequently, one can assume
$$[n_{\alpha_i},x_h]\equiv \sum\limits_{t=s+1}^{k}\prod\limits_{q=1}
^{h}\delta_{\alpha_{t,q},0}m_{i,h}^t, \quad 1\leq i\leq s.$$

The following congruences with $1\leq i\leq s, \ h+1\leq j \leq s$ and $i\neq j$:

$$0\ \equiv\ [n_{\alpha_i},[x_j,x_{h}]]=[[n_{\alpha_i},x_j],x_{h}]-
[[n_{\alpha_i},x_{h}],x_j]\equiv
\sum\limits_{t=s+1}^{k}\prod\limits_{q=1}^{h-1}\delta_{\alpha_{t,q},0}\Big(\alpha_{t,h}m_{i,j}^t-
\delta_{\alpha_{t,h},0}\alpha_{t,j}m_{i,h}^t\Big)$$

imply

$$\prod\limits_{q=1}^{h-1}\delta_{\alpha_{t,q},0}\alpha_{t,h}m_{i,j}^t\ \equiv\ 0,\quad
\prod\limits_{q=1}^{h}\delta_{\alpha_{t,q},0}\alpha_{t,j}m_{i,h}^t\ \equiv\ 0,\quad s+1\le t\le k.$$

As condition~(iii) of Definition~\ref{defn3.2} guarantees that $\alpha_t \neq 0$, it follows that the products~\eqref{eq19.1} hold for $h$. Taking $h = s + 1$ completes the proof of the lemma. \end{proof}

\begin{lem}\label{lem1.10} There exists a basis of $\cal R$ such that the following products modulo $\cal N^2$ hold true:

\begin{equation}\label{eq18}
[n_{\alpha_{i}},x_i]\equiv \alpha_{i,i}n_{\alpha_{i}}, \quad [x_i,x_j]\equiv 0, \quad 1\leq i, j\leq s.\\[1mm]
\end{equation}

\end{lem}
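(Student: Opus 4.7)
My plan is to establish the two congruences of Lemma~\ref{lem1.10} in sequence, combining inner-automorphism basis changes with the Jacobi identity, in the spirit of Lemmas~\ref{lem1.9} and~\ref{lem7.1}.

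For part~(A) (the statement $[n_{\alpha_i},x_i]\equiv \alpha_{i,i}n_{\alpha_i}$), I start from Lemma~\ref{lem1.8}(i), whose reduction modulo $\cal N^2$ reads $[n_{\alpha_i},x_i]\equiv \alpha_{i,i}n_{\alpha_i}+\sum_{t=s+1}^{k}m_{i,i}^t$. To eliminate each $m_{i,i}^t$, I branch on whether $\alpha_{t,i}$ equals $\alpha_{i,i}$. If $\alpha_{t,i}\neq \alpha_{i,i}$, I apply the change of basis \eqref{eq3.4.1} with $A=\alpha_{i,i}-\alpha_{t,i}$ and $l=i$, which removes the $\cal N_{\alpha_t}$-component of $[n_{\alpha_i},x_i]$ while respecting previously normalized products. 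If $\alpha_{t,i}=\alpha_{i,i}$, then writing $\alpha_t=\sum_{j}p_{j}^{(t)}\alpha_{j}$ in terms of primitive roots gives $p_i^{(t)}=1$; since $\alpha_t\neq\alpha_i$, some $p_j^{(t)}\neq 0$ with $j\in\{1,\ldots,s\}\setminus\{i\}$, hence $\alpha_{t,j}\neq 0$. I then expand the Jacobi identity $[n_{\alpha_i},[x_i,x_j]]=[[n_{\alpha_i},x_i],x_j]-[[n_{\alpha_i},x_j],x_i]$ modulo $\cal N^2$: the left-hand side lies in $[\cal N,\cal N]\subseteq \cal N^2$, and using Lemmas~\ref{lem1.9} and~\ref{lem7.1} the right-hand side collapses to $\sum_r \alpha_{r,j}m_{i,i}^r$, whence the direct-sum decomposition of $\cal N/\cal N^2$ into root subspaces forces $\alpha_{t,j}m_{i,i}^t\equiv 0$, i.e.\ $m_{i,i}^t\equiv 0$.

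For part~(B) (the statement $[x_i,x_j]\equiv 0$), I perform the basis change $x_i'=x_i-\sum_{t=s+1}^{k}\lambda_{i,t}\,n_{\alpha_t}$ on $\cal Q$; since $[n_{\alpha_m},n_{\alpha_t}]\in\cal N^2$, all congruences established in Lemmas~\ref{lem1.9}, \ref{lem7.1} and in part~(A) above are preserved. Writing $v_{i,j}^t=\mu_{i,j}^t n_{\alpha_t}$ and using part~(A) together with Lemmas~\ref{lem1.9} and~\ref{lem7.1}, a direct computation yields $[x_i',x_j']\equiv \sum_t\bigl(\mu_{i,j}^t+\lambda_{j,t}\alpha_{t,i}-\lambda_{i,t}\alpha_{t,j}\bigr)n_{\alpha_t}\pmod{\cal N^2}$. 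For each $t\in\{s+1,\ldots,k\}$, condition~(iii) of Definition~\ref{defn3.2} guarantees $\alpha_t\neq 0$ and hence the existence of $i_0=i_0(t)\in\{1,\ldots,s\}$ with $\alpha_{t,i_0}\neq 0$; I then set $\lambda_{i_0,t}=0$ and $\lambda_{j,t}=-\mu_{i_0,j}^t/\alpha_{t,i_0}$ for $j\neq i_0$. Consistency of the resulting system for all pairs $(i,j)$ follows from the Jacobi identity applied to $x_{i_0},x_i,x_j$, which (after invoking the already-normalized brackets) yields $\alpha_{t,i_0}\mu_{i,j}^t-\alpha_{t,i}\mu_{i_0,j}^t+\alpha_{t,j}\mu_{i_0,i}^t=0$, forcing $[x_i',x_j']\equiv 0\pmod{\cal N^2}$.

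The principal difficulty lies in the edge case of part~(A) when $\alpha_{t,i}=\alpha_{i,i}$: here the scalar $A$ in \eqref{eq3.4.1} degenerates and the direct basis-change argument fails. Condition~(iii) of Definition~\ref{defn3.2}, guaranteeing $\alpha_t\neq 0$, is precisely what supplies the auxiliary index $j$ needed for the Jacobi argument; without it the cross-terms $m_{i,i}^t$ can persist, as illustrated by Gorbatsevich's counterexample and Example~\ref{exam15}. A secondary technical point is to verify that each basis change preserves the products already normalized: this is ensured by the strictly upper-triangular (hence $\cal N^2$-increasing) nature of the derivations $d_{i,p}$ used in \eqref{eq3.4.1} and by the observation that $\cal N$-valued adjustments of $x_i$ affect $[n_{\alpha_m},x_i]$ only modulo $\cal N^2$.
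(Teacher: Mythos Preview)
Your proof is correct. Part~(A) coincides with the paper's argument: remove $m_{i,i}^t$ via~\eqref{eq3.4.1} when $\alpha_{t,i}\neq\alpha_{i,i}$, and use Jacobi on $\{n_{\alpha_i},x_i,x_j\}$ together with $\alpha_t\neq\alpha_i$ to kill the rest (a minor point: this step needs only $\alpha_t\neq\alpha_i$, not condition~(iii), so your closing commentary slightly overstates its role there). Part~(B) is where you diverge from the paper. The paper proceeds by induction on $h$: at step $h+1$ it shifts each $x_i$ with $i\le h$ by terms proportional to $v_{i,h+1}^t/\alpha_{t,h+1}$ to clear the components of $[x_i,x_{h+1}]$ with $\alpha_{t,h+1}\neq 0$, and then invokes Jacobi on $\{x_i,x_{h+1},x_j\}$ together with condition~(iii) to show the remaining components vanish. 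You instead carry out a single global basis change, selecting for each root $\alpha_t$ a pivot index $i_0(t)$ with $\alpha_{t,i_0}\neq 0$ and solving for all $\lambda_{j,t}$ simultaneously; the Jacobi identity on $\{x_{i_0},x_i,x_j\}$ then supplies precisely the relation $\alpha_{t,i_0}\mu_{i,j}^t-\alpha_{t,i}\mu_{i_0,j}^t+\alpha_{t,j}\mu_{i_0,i}^t=0$ needed for consistency across all pairs. Your route is cleaner and exposes the underlying mechanism (a $2$-cocycle on $\mathcal{Q}$ with values in a nonzero-weight space of $\mathcal{N}/\mathcal{N}^2$ is a coboundary), while the paper's iterative scheme trades conceptual transparency for a more hands-on step-by-step computation. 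One small notational caveat: writing $v_{i,j}^t=\mu_{i,j}^t n_{\alpha_t}$ presumes $\dim\mathcal{N}_{\alpha_t}=1$; replace $\lambda_{i,t}n_{\alpha_t}$ by an arbitrary $w_{i,t}\in\mathcal{N}_{\alpha_t}$ and work with the vectors $v_{i,j}^t$ directly, and the argument goes through verbatim.
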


\begin{proof} Let us fix an arbitrary $i\in\{1, \dots, s\}$. In the case of $\alpha_{i,i}\neq \alpha_{t,i},\ (s+1\le t\le k)$, by using \eqref{eq3.4.1} for $A=\alpha_{i,i}-\alpha_{s+p,i}, \ l=i, r=k-s,$
one can assume $[n_{\alpha_{i}},x_i]\equiv\alpha_{i,i}n_{\alpha_{i}}+\sum\limits_{t=s+1}^{k}
\delta_{\alpha_{i,i},\alpha_{t,i}}m_{i,i}^t.$

Taking into account the products \eqref{eq13}, consider the following chain of congruences:

$$\begin{array}{lll}
0&\equiv&[n_{\alpha_i},[x_i,x_j]]=[[n_{\alpha_i},x_i],x_j]-[[n_{\alpha_i},x_j],x_i]\\[1mm]
&\equiv&\sum\limits_{t=s+1}^{k}\Big((\alpha_{i,i}-
\alpha_{t,i})m_{i,j}^t+\delta_{\alpha_{i,i},\alpha_{t,i}}
m_{i,i}^t(\alpha_{t,j}-\alpha_{i,j})\Big).\end{array}$$

Hence, for any $s+1\leq t\leq k$ we obtain

$$(\alpha_{i,i}-\alpha_{t,i})m_{i,j}^t=0, \quad \delta_{\alpha_{i,i},\alpha_{t,i}}
m_{i,i}^t(\alpha_{t,j}-\alpha_{i,j})=0, \quad 1\leq i \neq j \leq s, \quad s+1\leq t \leq k.$$

Therefore, taking into account that $\alpha_i\neq \alpha_t$ we obtain $[n_{\alpha_{i}},x_i]\equiv\alpha_{i,i}n_{\alpha_{i}}.$ Hence, the first products of \eqref{eq18} are obtained.

Due to Lemma \ref{lem1.8}, we have $[x_i,x_j]\equiv\sum\limits_{t=s+1}^kv_{i,j}^t.$ Taking the change of basis as follows

$$x_1'=x_1-\sum\limits_{\tiny\begin{array}{cc}t=s+1 \\[0,1mm] \alpha_{t,2}\neq0\end{array}}^{k}
\frac{\delta_{\alpha_{t,1},0}v_{1,2}^t}{\alpha_{t,2}},\quad x_i'=x_i+\sum\limits_{\tiny\begin{array}{cc}t=s+1 \\[0,1mm] \alpha_{t,1}\neq0\end{array}}^{k}\frac{v_{1,2}^t}{\alpha_{t,1}}, \ 2\le i\le s,$$

one can assume $[x_1,x_{2}]\ \equiv\ \sum\limits_{t=s+1}^{k} \delta_{\alpha_{t,1},0}\delta_{\alpha_{t,2},0}v_{1,2}^t$ and
$[x_1,x_i]\ \equiv\ \sum\limits_{t=s+1}^{k}\delta_{\alpha_{t,1},0}v_{1,i}^t,\ 3\le i\le s.$


From Jacobi identity for the triple $\{x_1, x_2, x_i\}$ with $3\le i\le s$, we conclude
\begin{equation}\label{eq3.11}
\delta_{\alpha_{t,1},0}
\delta_{\alpha_{t,2},0}\alpha_{t,i}v_{1,2}^t-
\delta_{\alpha_{t,1},0}\alpha_{t,2}v_{1,i}^t+
\alpha_{t,1}v_{2,i}^t=0, \quad s+1\leq t \leq k.
\end{equation}

An analysis of \eqref{eq3.11} for possible cases of parameters $\alpha_{t,i}$ leads to
\begin{equation}\label{xx}[x_1,x_2]\ \equiv\ 0,\quad [x_1,x_i]\ \equiv\ \sum\limits_{t=s+1}^{k}\delta_{\alpha_{t,1},0}\delta_{\alpha_{t,2},0}v_{1,i}^t,\ 3\le i\le s.\end{equation}

By induction on $h$ with $1\le i\le s$ and  $i<j$, we prove the following products

\begin{equation}\label{eq1111}
[x_i,x_j]\equiv0,\ 2\le j\le h,  \quad [x_i,x_j]\equiv\sum\limits_{t=s+1}^{k}\prod\limits_{q=1}^{h}
\delta_{\alpha_{t,q},0}v_{i,j}^t,\ 1\le i\le h,\ h+1\le j\le s.\end{equation}

Thanks to \eqref{xx}, we have the base of induction. Assuming  that congruences \eqref{eq1111} hold for $h$, we prove they hold for $h+1.$

Setting
$$x_i'=x_i-\sum\limits_{\tiny\begin{array}{cc}t=s+1 \\[0,1mm] \alpha_{t,h}\neq0\end{array}}^{k}\prod\limits_{q=1}^{h}
\delta_{\alpha_{t,q},0}\frac{v_{i,h+1}^t}{\alpha_{t,h+1}}, \ 1\le i \le h,$$ we can assume
$$[x_i,x_{h+1}]\equiv\sum\limits_{t=s+1}^{k}\prod\limits_{q=1}^{h+1}
\delta_{\alpha_{t,q},0}v_{i,h+1}^t,\  1\le i \le h.$$

The equality with $i < h+1\le j:$
$$[x_{i},[x_{h+1},x_{j}]]=[[x_i,x_{h+1}],x_{j}]-[[x_i,x_{j}],x_{h+1}]$$
implies
$$\prod_{q=1}^{h}
\delta_{\alpha_{t,q},0}\delta_{\alpha_{t,h+1},0}\alpha_{t,j}v_{i,h+1}^t-
\prod_{q=1}^{h}
\delta_{\alpha_{t,q},0}\alpha_{t,h+1}v_{i,j}^t+
\alpha_{t,i}v_{h+1,j}^t=0, \quad s+1\leq t \leq k.$$

If $\alpha_{t,i}\neq0$ for any values of $t$, then $[x_i,x_{h+1}]\equiv 0$, i.e., \eqref{eq1111} holds true for $h+1$.

If there exists some $t$ such that $\alpha_{t,i}=0$, then
we obtain
$$\left\{\begin{array}{llllll}
\alpha_{t,j}v_{i,h+1}^t=0, & if & \alpha_{t,h+1}=0,\\[1mm]
v_{i,j}^t=0, &if &\alpha_{t,h+1}\neq0.\\[1mm]
\end{array}\right.$$

Obviously, $\alpha_{t,h+1}\neq0$ implies $[x_i,x_{h+1}]\equiv0.$ Thanks to condition (iii) of Definition \ref{defn3.2} the condition $\alpha_{t,h+1}=0$ deduces existence of $j$ such that $\alpha_{t,j}\neq0$, which implies $[x_i,x_{h+1}]\equiv0.$ Thus, \eqref{eq1111} is proved.

Finally, putting $h=s$ in \eqref{eq1111} we complete the proof of the second products of \eqref{eq18}.
\end{proof}

Now we present the main result on the description of maximal solvable extensions of a $d$-locally diagonalizable nilpotent Lie algebra.

\begin{thm} \label{mainthm} There exists a unique (up to isomorphism)  maximal solvable extension of a $d$-locally diagonalizable nilpotent Lie algebra and it is isomorphic to an algebra of the type $\cal R_{\cal T}.$
\end{thm}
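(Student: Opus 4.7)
The plan is to assemble Lemmas \ref{lem1.8}--\ref{lem1.10} into exact structural relations in $\mathcal{R}$, and then invoke Remark \ref{rem3.4} for uniqueness. After the basis changes constructed in those lemmas, one has
$$[n_{\alpha_i}, x_j] \equiv \alpha_{i,j}\, n_{\alpha_i} \pmod{\mathcal{N}^2}, \qquad [x_i, x_j] \equiv 0 \pmod{\mathcal{N}^2},$$
for all admissible indices, so that on the abelianization $\mathcal{N}/\mathcal{N}^2$ each operator $\operatorname{ad}_{x_j|\mathcal{N}}$ coincides with the torus element $t_j \in \mathcal{T}$ determined by $t_j(n_{\alpha_i}) = \alpha_{i,j}\, n_{\alpha_i}$, where $1 \le i \le k$ and $1 \le j \le s$.

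The next step is to promote these mod-$\mathcal{N}^2$ congruences to exact equalities. Iterating the basis-change formula \eqref{eq3.4.1} together with the induction schemes used in the proofs of Lemmas \ref{lem1.9}--\ref{lem1.10}, now applied along the descending central series $\mathcal{N} \supseteq \mathcal{N}^2 \supseteq \mathcal{N}^3 \supseteq \cdots$, I would show by induction on $p$ that the same absorption technique eliminates the $\mathcal{N}^{p+1}$-corrections appearing at level $\mathcal{N}^p/\mathcal{N}^{p+1}$. Conditions (ii) and (iii) of Definition \ref{defn3.2} are essential here: condition (ii) forces the diagonal block of $\operatorname{ad}_{x_j|\mathcal{N}}$ on each root subspace to equal the corresponding torus block, while condition (iii), $0 \notin W$, ensures that the denominators $\alpha_{i,l} - \alpha_{t,l}$ arising in the basis changes do not vanish because of a spurious zero-root component. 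Since $\mathcal{N}$ is nilpotent, the induction terminates, yielding $[n_{\alpha_i}, x_j] = \alpha_{i,j}\, n_{\alpha_i}$ and $[x_i, x_j] = 0$ exactly in the refined basis of $\mathcal{R}$.

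These relations identify $\mathcal{R}$ with the semidirect product $\mathcal{N} \rtimes \mathcal{T} = \mathcal{R}_{\mathcal{T}}$. For uniqueness, let $\widetilde{\mathcal{R}} = \mathcal{N} \oplus \widetilde{\mathcal{Q}}$ be another maximal solvable extension of $\mathcal{N}$ with associated maximal torus $\widetilde{\mathcal{T}}$ obtained from the diagonal parts of $\operatorname{ad}_{\widetilde{\mathcal{Q}}|\mathcal{N}}$. By the first part, $\widetilde{\mathcal{R}} \cong \mathcal{R}_{\widetilde{\mathcal{T}}}$, and by Remark \ref{rem3.4} the conjugacy of maximal tori under an inner automorphism $\varphi \in \operatorname{Aut}(\mathcal{N})$ yields an isomorphism $\mathcal{R}_{\mathcal{T}} \cong \mathcal{R}_{\widetilde{\mathcal{T}}}$, whence $\mathcal{R} \cong \widetilde{\mathcal{R}}$.

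The main obstacle is the promotion step: ruling out any residual non-diagonal contribution that could survive at deeper layers of the central series. This is where all three conditions of $d$-locally diagonalizability are jointly invoked --- Example \ref{exam15}, where only condition (iii) fails, already exhibits non-isomorphic maximal solvable extensions, so the hypothesis cannot be relaxed, and the induction must genuinely track each root space independently rather than merely the associated graded picture.
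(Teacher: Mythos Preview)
Your proposal is correct and follows essentially the same approach as the paper: Lemmas \ref{lem1.9}--\ref{lem1.10} supply the base case modulo $\mathcal{N}^2$, an induction along the lower central series (using the inner-automorphism basis changes \eqref{eq3.4.1} and Jacobi-identity computations) promotes these to exact relations, and Remark \ref{rem3.4} yields uniqueness via conjugacy of maximal tori. The paper makes explicit one point you leave implicit, namely that all basis changes are compositions of inner automorphisms of $\mathcal{N}$, so the multiplication table of $\mathcal{N}$ is unchanged and the resulting algebra is genuinely $\mathcal{R}_{\mathcal{T}}$.
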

\begin{proof} Let $\mathcal{R} = \mathcal{N} \oplus \mathcal{Q}$ be a maximal solvable extension of a $d$-locally diagonalizable nilpotent Lie algebra $\mathcal{N}$, and let
$\mathcal{Q} = \operatorname{Span}\{x_1, \dots, x_s\}$. Denote by $\tau$ the nilindex of $\mathcal{N}$. First, we prove by induction on $m$ the following product relations modulo $\mathcal{N}^m$ for any $m \leq \tau$:

\begin{equation}\label{eq1.10.1}
\left\{\begin{array}{llllllll}
[n_{\alpha_i},x_i] & \equiv & \alpha_{i,i}n_{\alpha_i}, & 1\leq i\leq s,\\[1mm]
[n_{\alpha_i},x_j]&\equiv & 0,& {1\leq i\neq j \leq s},\\[1mm]
[n_{\alpha_i},x_j]&\equiv & \alpha_{i,j}n_{\alpha_i},& s+1\leq i\leq k,\   1\leq j \leq s,\\[1mm]
[x_i,x_j]&\equiv & 0,  & 1\leq i, j\leq s.\\[1mm]
\end{array}\right.
\end{equation}

The base of induction with $m=2$ is true due to Lemmas \ref{lem1.9} -- \ref{lem1.10}. Let us suppose that \eqref{eq1.10.1} is true for $m$.
Then the products \eqref{eq1.10.1} modulo $\cal N^{m+1}$ will have the following forms:

$$[n_{\alpha_i},x_i]\equiv \alpha_{i,i}n_{\alpha_i}+\sum\limits_{p=s_1}^{s_m}m_{i,i}^p, \ \ [n_{\alpha_i},x_j]\equiv \sum\limits_{p=s_1}^{s_m}m_{i,j}^p, \ \
[n_{\alpha_i},x_j]\equiv \alpha_{i,j}n_{\alpha_i}+\sum\limits_{p=s_1}^{s_m}m_{i,j}^p, \ \ [x_i,x_j]\equiv \sum\limits_{p=s_1}^{s_m}v_{i,j}^p,$$
where $\cal N_{\alpha_{s_1}}\oplus \dots \oplus \cal N_{\alpha_{s_m}} \subseteq\cal N^{m}\setminus \cal N^{m+1}$.


From congruences below with $1\leq i \leq k$ and $1\leq j, t \leq s$
$$\begin{array}{lll}
0&\equiv&[n_{\alpha_i},[x_j,x_t]]=[[n_{\alpha_i},x_j],x_t]-
[[n_{\alpha_i},x_t],x_j]\\[3mm]
&\equiv&\alpha_{i,j}m_{i,t}^{s_1}-\alpha_{s_1,j}m_{i,t}^{s_1}
+\alpha_{s_1,t}m_{i,j}^{s_1}-\alpha_{i,t}m_{i,j}^{s_1}+
\sum\limits_{p=s_1+1}^{s_{m}}(*)n_{\alpha_p},\\[3mm]\end{array}$$
we derive $(\alpha_{i,j}-\alpha_{s_1,j})m_{i,t}^{s_1}=(\alpha_{i,t}-\alpha_{s_1,t})m_{i,j}^{s_1}.$ Since $\alpha_i\neq\alpha_{s_1}$, there exists $t$ such that $\alpha_{i,t}\neq\alpha_{s_1,t}.$

If $\alpha_{s_1,j}-\alpha_{i,j}=0,$ then $m_{i,j}^{s_1}=0$.

If $\alpha_{s_1,j}-\alpha_{i,j}\neq0,$ then $\frac{m_{i,j}^{s_1}}{\alpha_{s_1,j}-\alpha_{i,j}}=\frac{m_{i,t}^{s_1}}{\alpha_{s_1,t}-\alpha_{i,t}}$ and taking the change of basis \eqref{eq3.4.1} for
$A=\alpha_{i,i}-\alpha_{s_i,i}, l=i$ and $r=m$ we obtain

$$[n_{\alpha_i}, x_j]\equiv \alpha_{i,j}n_{\alpha_i}, \quad 1\leq i \leq k, \quad 1\leq j \leq s.$$

Taking into account the fact that in Jacobi identity the basis is natural, one can conclude that
$$[n_{\alpha_i}, x_j]\equiv \alpha_{i,j}n_{\alpha_i}, \quad s+1\leq i \leq n,\ $$
where $\alpha_{i,j}\in \mathbb{C}$ is belongs to the set of  roots such that
$$t_j(n_{\alpha_i})=\alpha_{i,j}n_{\alpha_i}, 1\leq i\leq n,\ 1\le j\le s.$$

Applying the same arguments as in the proof of Lemma \ref{lem1.10} modulo $\cal N^{m+1}$ one can obtain
$$[x_i,x_j]\equiv 0, \  1\leq j< i \leq s.$$

Therefore, \eqref{eq1.10.1} is proved. Setting in \eqref{eq1.10.1} $m=\tau+1,$
we conclude that pairwise commuting derivations ${ad_{x_i}}_{|\cal N}$
$(1\leq i \leq s)$ act diagonally on generators of $\cal N$. Consequently, ${ad_{x_i}}_{|\cal N}$ act diagonally on $\cal N$. To sum up, there exists a basis of $\cal N$ such that for any $n_{\alpha}\in \cal N_{\alpha}$ and $\alpha\in W$ we have

\begin{equation}\label{eq4.12.1}
[n_{\alpha},x_j]=\alpha(x_j)n_{\alpha}, \quad [x_i,x_j]=0, \quad 1\leq i, j \leq s.
\end{equation}

Since all changes of elements of the nilradical $\cal N$ are obtained through the superpositions of inner automorphisms, the multiplications table of $\cal N$ remains unchanged. Consequently, we have proved $\cal R \cong \cal R_{\cal T}$. Thanks to Remark \ref{rem3.4} we obtain the uniqueness, up to isomorphism, of maximal extension $\cal R$. \end{proof}

Note that in the case of a split $d$-locally diagonalizable nilpotent algebra $\mathcal{N}$, that is,
$\mathcal{N} = \bigoplus\limits_{i=1}^{p} \mathcal{N}_i$,
it follows from \cite{Leger1} that a maximal torus on $\mathcal{N}$ decomposes as
$\bigoplus\limits_{i=1}^p \mathcal{T}_i,$
where $\mathcal{T}_i$ is a maximal torus on  $\mathcal{N}_i$. Consequently, a maximal solvable extension of $\mathcal{N}$ is isomorphic to $\bigoplus\limits_{i=1}^{p}\mathcal{R}_{\mathcal{T}_i}.$

Example \ref{exam15} as well as Gorbatsevich's example shows  that the conditions (ii)-(iii) of  $d$-locally diagonalizability of nilradical in Theorem \ref{mainthm} are essential.

\begin{rem}
It should be noted that Theorem \ref{mainthm} gives the positive answer to \v{S}nobl's Conjecture under the assumption that $\mathfrak{n}$ is complex $d$-locally diagonalizable nilpotent Lie algebra.
\end{rem}

When condition~(i) of Definition~\ref{defn3.2} holds but condition~(ii) fails, the structure of maximal solvable extensions of a nilpotent Lie algebra $\mathcal{N}$ is no longer uniquely determined.

\begin{prop} \label{prop3.12} Let $\mathcal{N}$ be a nilpotent Lie algebra such that condition~(i) of Definition~\ref{defn3.2} is satisfied, whereas condition~(ii) is not. Then $\mathcal{N}$ admits at least two non-isomorphic maximal solvable extensions.
\end{prop}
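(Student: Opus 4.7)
The plan is to mimic the construction from Example~\ref{exam15}: build $\cal R_1 = \cal R_{\cal T} = \cal N \rtimes \cal T$ as one maximal solvable extension (maximal by Remark~\ref{rem3.4} via condition~(i)), and use the non-diagonal derivation provided by the failure of condition~(ii) to produce a second extension $\cal R_2$ that cannot be conjugated to $\cal R_1$. Both will have dimension $\dim\cal N+\operatorname{rank}(\cal N)$.

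To construct $\cal R_2$, I would take a nil-independent derivation $d$ whose projection $\pi_\alpha\circ d|_{\cal N_\alpha}$ is non-diagonal on some root subspace, write the additive Jordan decomposition $d=d_s+d_n$ (so $d_s\in\cal T$ by (i)), and extract the weight-zero component $\tilde d_n$ of $d_n$ under the $\cal T$-adjoint action on $\Der(\cal N)$. The failure of (ii) forces $\tilde d_n\neq 0$, and being of weight zero $\tilde d_n$ commutes with $\cal T$. Completing $d_s=t_1$ to a basis $\{t_1,\ldots,t_s\}$ of $\cal T$, form $\widetilde{\cal T}:=\operatorname{Span}\{t_1+\tilde d_n,\,t_2,\ldots,t_s\}$; this is abelian in $\Der(\cal N)$, and $\cal R_2:=\cal N\rtimes\widetilde{\cal T}$ has nilradical $\cal N$ (every non-zero element of $\widetilde{\cal T}$ acts with non-trivial semisimple part) and the same maximal dimension as $\cal R_1$.

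For the non-isomorphism, suppose $\varphi:\cal R_1\to\cal R_2$ is an isomorphism. Since the nilradical is characteristic, $\varphi(\cal N)=\cal N$; the restriction $\varphi(\cal T)|_{\cal N}$ is a maximal torus of $\cal N$, which by the conjugacy of maximal tori may be assumed (after composing $\varphi$ with the corresponding inner automorphism of $\cal R_2$ induced by an element of $\cal N$) to equal $\cal T$, with $\operatorname{ad}_{\varphi(t_i)}|_{\cal N}=t_i$. Writing $\varphi(t_i)=n_i+\sum_j c_{ij}y_j$ with $y_1=t_1+\tilde d_n$ and $y_j=t_j$ for $j\ge 2$, and decomposing the identity $t_i=\operatorname{ad}_{n_i}+c_{i1}\tilde d_n+\sum_j c_{ij}t_j$ under the $\cal T$-weight grading, one obtains $\operatorname{ad}_{(n_i)_\beta}=0$ for $\beta\neq 0$; separating the weight-zero part into semisimple and nilpotent pieces then yields $c_{ij}=\delta_{ij}$ together with $\tilde d_n=-\operatorname{ad}_{(n_1)_0}$, where $(n_1)_0\in\cal N_0$. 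Hence $\tilde d_n$ would have to be an inner derivation induced by an element of $\cal N_0$.

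The main obstacle is to ensure that $d$ can be chosen so that $\tilde d_n$ is \emph{not} of the form $\operatorname{ad}_{m_0}$ with $m_0\in\cal N_0$, thereby delivering the required contradiction. I would handle this by the following reduction: if every nil-independent derivation had its weight-zero nilpotent part inner via $\cal N_0$, then subtracting the corresponding inner derivations would yield nil-independent derivations with vanishing weight-zero nilpotent parts, and the argument of Theorem~\ref{mainthm} would then apply verbatim to force uniqueness of the maximal extension, contradicting the essential failure of (ii). Consequently a choice with outer $\tilde d_n$ exists, the inner-derivation constraint extracted in the previous paragraph cannot be satisfied, and $\cal R_1\not\cong\cal R_2$.
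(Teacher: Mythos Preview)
Your construction of the two candidate extensions is essentially the paper's: one takes $\cal R_{\cal T}$, the other perturbs a generator of $\cal T$ by the nilpotent part of a derivation witnessing the failure of~(ii). Your extraction of the weight-zero component $\tilde d_n$ is in fact more careful than the paper's terse $\widetilde{\cal T}=\operatorname{Span}\{\cal T,d_1\}$, since weight zero is exactly what makes $\widetilde{\cal T}$ abelian and guarantees that $\cal R_2$ still has nilradical $\cal N$.

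The gap is your last paragraph. You argue that if, for every nil-independent $d$, the weight-zero nilpotent part $\tilde d_n$ were inner via $\cal N_0$, then after subtracting these inner pieces the argument of Theorem~\ref{mainthm} ``would apply verbatim'' and force uniqueness, ``contradicting the essential failure of~(ii)''. This is circular: failure of~(ii) does not by itself imply non-uniqueness --- that implication is precisely Proposition~\ref{prop3.12}, the statement you are proving, so it cannot serve as the contradiction. In addition, the proof of Theorem~\ref{mainthm} uses condition~(iii) explicitly (at the end of the proofs of Lemma~\ref{lem7.1} and Lemma~\ref{lem1.10}), and~(iii) is not assumed here; the argument does not go through verbatim once~(ii) and~(iii) are dropped, even after absorbing inner weight-zero pieces. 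What you actually need is an independent reason why your specific $\tilde d_n$ is not of the form $\operatorname{ad}_{m_0}$ with $m_0\in\cal N_0$. The paper avoids this loop entirely: it argues non-isomorphism via a direct invariant, namely that in $\cal R_{\cal T}$ the projection of each $\operatorname{ad}_x$ onto every root subspace $\cal N_\alpha$ is diagonalizable, whereas in the perturbed extension there is an element for which it is not, and this distinguishes the two algebras without any appeal to Theorem~\ref{mainthm}.
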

\begin{proof} Part a). Consider a maximal solvable Lie algebra $\mathcal{R} = \mathcal{N} \oplus \mathcal{Q}$ such that, for some $x_i \in \mathcal{Q}$, the operator $\operatorname{ad}_{x_i|\mathcal{N}}$ has a non-diagonal projection and restriction on a certain root subspace with respect to a maximal torus $\cal T$ on $\mathcal{N}$. According to \cite{Leger1}, the derivation $\operatorname{ad}_{x_i|\mathcal{N}}$ admits a decomposition $\operatorname{ad}_{x_i|\mathcal{N}} = d_0 + d_1,$ where $d_0$ and $d_1$ are commuting diagonalizable and nilpotent derivations, respectively. By condition~(i), one has $d_0 \in \mathcal{T}$. Consider two maximal solvable extensions $\cal R_{\cal T}$ and $\widetilde{\cal R}=\cal N \rtimes \widetilde{\cal T},$ where $\widetilde{\cal T}=\operatorname{Span}\{\cal T, d_1\}$. These extensions are non-isomorphic, as the projection of the products
$\operatorname{ad}_{x_i}(\mathcal{N}_{\alpha})$ onto $\mathcal{N}_{\alpha}$ is diagonalizable, which contradicts the property of $\operatorname{ad}_{x_i|\mathcal{N}}$. \end{proof}

The following proposition shows that non-maximal solvable extensions of a nilpotent Lie algebra satisfying condition~(i) of Definition~\ref{defn3.2} are not unique.

\begin{prop} \label{prop3.13} A nilpotent Lie algebra $\mathcal{N}$ satisfying condition~(i) of Definition~\ref{defn3.2} admits at least two non-isomorphic, non-maximal solvable extensions.
\end{prop}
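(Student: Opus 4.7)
The plan is to construct two non-isomorphic non-maximal solvable extensions explicitly, as subalgebras of the semidirect product $\mathcal{R}_{\mathcal{T}} = \mathcal{N} \rtimes \mathcal{T}$, by choosing two subspaces of the maximal torus $\mathcal{T}$ of different dimensions.

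Combining condition~(i) with Remark~\ref{rem3.4}, one has $s := \operatorname{rank}(\mathcal{N}) = \dim \mathcal{T} = \operatorname{codim}_{\mathcal{R}_{\max}} \mathcal{N}$ in every maximal solvable extension $\mathcal{R}_{\max}$. Consequently a solvable extension $\mathcal{R} = \mathcal{N} \oplus \mathcal{Q}$ is non-maximal precisely when $\dim \mathcal{Q} < s$. I work under the hypothesis $s \geq 2$, which is the meaningful range of the statement: when $s = 1$ the only non-maximal extension is the trivial one $\mathcal{R} = \mathcal{N}$, making the claim vacuous, so the proposition is to be read with this tacit assumption.

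Fix a basis $\{t_1, \dots, t_s\}$ of $\mathcal{T}$ and define
$$\mathcal{R}_1 := \mathcal{N}, \qquad \mathcal{R}_2 := \mathcal{N} \rtimes \operatorname{Span}\{t_1\},$$
where the bracket on $\mathcal{R}_2$ is extended by $[n, t_1] = t_1(n)$ for $n \in \mathcal{N}$. Both algebras are solvable, and since $\dim \mathcal{Q}_1 = 0$ and $\dim \mathcal{Q}_2 = 1$ are strictly less than $s$, both are non-maximal. As $\dim \mathcal{R}_2 = \dim \mathcal{R}_1 + 1$, the two extensions are necessarily non-isomorphic.

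The only genuine step---and the principal obstacle---is to confirm that the nilradical of $\mathcal{R}_2$ equals $\mathcal{N}$, rather than some strictly larger ideal. I would apply Lie's theorem to $\operatorname{ad}(\mathcal{R}_2)$ to fix a basis in which all adjoint operators are simultaneously upper-triangular. In such a basis, every $\operatorname{ad}_n|_{\mathcal{N}}$ with $n \in \mathcal{N}$ has vanishing diagonal (being nilpotent), so the diagonal of $\operatorname{ad}(\alpha t_1 + n)|_{\mathcal{N}}$ equals $\alpha$ times the diagonal of $t_1$. Since $t_1 \neq 0$ is semisimple it admits a nonzero eigenvalue, and hence $\operatorname{ad}(\alpha t_1 + n)|_{\mathcal{N}}$ is non-nilpotent whenever $\alpha \neq 0$. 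Thus every element of $\mathcal{R}_2 \setminus \mathcal{N}$ acts non-nilpotently on $\mathcal{N}$ and cannot belong to a nilpotent ideal, so $\mathcal{N}$ is indeed the nilradical of $\mathcal{R}_2$. The remaining verifications (solvability and dimension count) are immediate.
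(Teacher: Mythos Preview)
Your choice of $\mathcal{R}_1 = \mathcal{N}$ collides with the paper's standing convention (end of the introduction): ``unless otherwise noted we shall assume non-nilpotent solvable algebras.'' A \emph{solvable extension} of $\mathcal{N}$ in this paper therefore means a non-nilpotent solvable Lie algebra with nilradical $\mathcal{N}$, so the complementary subspace $\mathcal{Q}$ must be nonzero. Your $\mathcal{R}_1 = \mathcal{N}$ is thus not an admissible example, and your pair collapses to a single extension. Attempting to patch this by taking $\dim\mathcal{Q}_1 = 1$ and $\dim\mathcal{Q}_2 = 2$ fails precisely when $s = 2$, since then $\mathcal{R}_2$ would already be maximal.

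The paper's argument sidesteps this by building, for any fixed $1 \le r \le s-1$, two extensions of the \emph{same} dimension $\dim\mathcal{N} + r$: one from the sub-torus $\mathcal{T}_1 = \operatorname{Span}\{t_{\alpha_1},\dots,t_{\alpha_r}\}$ and one from $\mathcal{T}_2 = \operatorname{Span}\{t_{\alpha_1},\dots,t_{\alpha_{r-1}},\, t_{\alpha_r}+\cdots+t_{\alpha_s}\}$. They are distinguished not by total dimension but by their derived ideals: the last generator of $\mathcal{T}_2$ acts nontrivially on the generator root spaces $\mathcal{N}_{\alpha_{r+1}},\dots,\mathcal{N}_{\alpha_s}$, which $\mathcal{T}_1$ annihilates, forcing $\dim\mathcal{R}_{\mathcal{T}_1}^2 < \dim\mathcal{R}_{\mathcal{T}_2}^2$. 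This equal-codimension comparison is also what makes the proposition a genuine counterpoint to Theorem~\ref{mainthm}; distinguishing extensions merely by dimension would be vacuous in that context.
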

\begin{proof} Thanks to the second part of Remark~\ref{rem3.4}, a maximal solvable extension has dimension $\dim \mathcal{R} = \dim \mathcal{N} + \operatorname{rank}(\mathcal{N}).$ Let $\operatorname{rank}(\mathcal{N}) = s$, and denote by
$\mathcal{T} = \operatorname{Span}\{t_{\alpha_1}, t_{\alpha_2}, \dots, t_{\alpha_s}\}$ a maximal torus of $\mathcal{N}$.
Consider the root space decomposition $\mathcal{N} = \mathcal{N}_{\alpha_1} \oplus \mathcal{N}_{\alpha_2} \oplus \dots \oplus \mathcal{N}_{\alpha_s} \oplus \dots \oplus \mathcal{N}_{\alpha_n}$
with respect to the $\mathcal{T}$, and fix an arbitrary integer $1 \le r \le s-1$.

Set $$\mathcal{T}_1 = \operatorname{Span}\{t_{\alpha_1}, t_{\alpha_2}, \dots, t_{\alpha_r}\},
\qquad
\mathcal{T}_2 = \operatorname{Span}\{t_{\alpha_1}, \dots, t_{\alpha_r} + \cdots + t_{\alpha_s}\}.$$
One can verify that the solvable Lie algebras $\mathcal{R}_{\mathcal{T}_1}$ and $\mathcal{R}_{\mathcal{T}_2}$ both have dimension $\dim \mathcal{N} + r$ and are non-isomorphic.
Indeed, setting
$\Delta = \{\mathcal{N}_{\alpha_{r+1}}, \dots, \mathcal{N}_{\alpha_s}\} \cap \big(\mathcal{N} \setminus \mathcal{N}^2\big),$ we have $\Delta \neq \emptyset$ and
$$\mathcal{R}_{\mathcal{T}_1}^2 \cap \Delta = 0,
\qquad
\mathcal{R}_{\mathcal{T}_2}^2 = \mathcal{R}_{\mathcal{T}_1}^2 \cup \Delta,$$
which implies that $\dim \mathcal{R}_{\mathcal{T}_1}^2 < \dim \mathcal{R}_{\mathcal{T}_2}^2.$
Hence, $\mathcal{R}_{\mathcal{T}_1}$ and $\mathcal{R}_{\mathcal{T}_2}$ are non-isomorphic.
\end{proof}

\section{Further discussion and open directions}

In this section, we extend the main results of this work to the setting of Lie superalgebras. We examine the innerness of derivations in solvable Lie algebras and discus on vanishing of higher-order cohomology groups. Finally, we formulate natural conjectures and outline direction for future research.

\subsection{Maximal solvable extensions of $d$-locally diagonalizable nilpotent Lie superalgebras}
\

In this subsection, we generalize the construction of maximal solvable extensions from $d$-locally diagonalizable nilpotent Lie algebras to certain nilpotent Lie superalgebras. By focusing on even superderivations and maximal tori, we establish an analogue of Theorem~\ref{mainthm}, providing an explicit description of these extensions in the superalgebra setting.

In \cite{Wang2002}, the definition of a torus of a Lie superalgebra is given in a manner analogous to that for Lie algebras.
Since the diagonalizability of an odd superderivation implies its nullity, one may assume that a torus of a complex Lie superalgebra $\mathcal{L}$ is an abelian subalgebra of the Lie algebra $\operatorname{Der}(\mathcal{L})_0$ consisting of diagonalizable endomorphisms. According to the fact that an abelian algebra consisting of diagonalizable elements is fully reducible~\cite{Jac1}, and by Mostow's theorem on the conjugacy of maximal fully reducible subalgebras of a linear Lie algebra~\cite{Mostow}, one can conclude that a maximal torus of a nilpotent Lie superalgebra $\mathcal{N}$ is a maximal fully reducible subalgebra of the linear Lie algebra $\operatorname{Der}(\mathcal{N})_0$. Consequently, any two maximal tori of a nilpotent Lie superalgebra $\mathcal{N}$ are conjugate under an inner automorphism belonging to the radical of the derived algebra of $\operatorname{Der}(\mathcal{N})_0$. The dimension of a maximal torus of a nilpotent Lie (super)algebra is denoted by $\operatorname{rank}(\mathcal{N})$.

From now on we shall consider a solvable Lie superalgebra $\cal R=\cal R_0\oplus \cal R_1$ which satisfies the condition: $[\cal R_1,\cal R_1]\subseteq [\cal R_0,\cal R_0]$. This condition ensures fulfillment of an analogue of Lie's theorem for Lie superalgebras \cite{Kac}. Let us decompose a solvable Lie superalgebra $\mathcal{R}=\mathcal{N}\oplus \mathcal{Q}$ into direct sum of vector subspaces $\mathcal{N}$ and $\mathcal{Q}$, where $\mathcal{N}$ is its nilradical and $\mathcal{Q}$ is a complementary subspace. It is known that $\mathcal{Q}\subseteq \mathcal{R}_0$ and $ad_{x|\mathcal{N}}$ is non-nilpotent operator for any $x\in \mathcal{Q}$ (see Propositions 3.12 -- 3.13 in \cite{super}). This implies that the operator $ad_{{x}|\mathcal{N}}, x\in \mathcal{Q},$ which is an even superderivation, admits a decomposition $ad_{{x}|\mathcal{N}}=d_{0}+d_{1},$ where $d_{0}$ and $d_{1}$ are mutually commuting diagonalizable and nilpotent even superderivations of $\mathcal{N}$, respectively. Furthermore, Lie's theorem guarantees the upper-triangularity of $ad_{{x}|\mathcal{N}}$.

The concept of $d$-locally diagonalizability for Lie superalgebras is defined similarly to that for Lie algebras because of an even superderivation is just a derivation.

Thus, a maximal solvable extension of $d$-locally diagonalizable nilpotent Lie superalgebra $\cal N$ can be constructed by even superderivations of $\cal N$. Therefore, Theorem \ref{mainthm} holds true for a superalgebra case. So, the main result related to Lie superalgebras is the following:

\begin{thm} \label{mainthm2}
Let $\mathcal{N}$ be a $d$-locally diagonalizable nilpotent Lie superalgebra.
Then there exists a unique maximal solvable extension of $\mathcal{N}$, up to isomorphism, that satisfies an analogue of Lie's theorem. Moreover, this extension is isomorphic to a solvable Lie superalgebra of the form $\mathcal{R}_{\mathcal{T}}$.
\end{thm}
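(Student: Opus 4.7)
The plan is to replay the proof of Theorem~\ref{mainthm} in the $\mathbb{Z}_2$-graded setting, exploiting the fact that the complementary subspace $\mathcal{Q}$ lies entirely in the even part. Specifically, since every $x\in\mathcal{Q}$ satisfies that $\operatorname{ad}_{x|\mathcal{N}}$ is non-nilpotent and since an odd diagonalizable superderivation necessarily vanishes, one has $\mathcal{Q}\subseteq \mathcal{R}_0$. Consequently each $\operatorname{ad}_{x|\mathcal{N}}$ with $x\in\mathcal{Q}$ is an \emph{even} superderivation of $\mathcal{N}$, whose graded Leibniz rule reduces to the ordinary one. Thus the notions of maximal torus, root space decomposition, nil-independence, and $d$-locally diagonalizability transfer from the Lie algebra setting without modification.

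First I would fix a homogeneous root basis of $\mathcal{N}$ in which all operators $\operatorname{ad}_{x|\mathcal{N}}$, $x\in\mathcal{Q}$, are simultaneously upper triangular; the hypothesis $[\mathcal{R}_1,\mathcal{R}_1]\subseteq[\mathcal{R}_0,\mathcal{R}_0]$ supplies the analogue of Lie's theorem needed to do this. By condition~(i) of $d$-locally diagonalizability, the diagonal parts of these operators generate a maximal torus $\mathcal{T}$ of $\mathcal{N}$, and the argument of Remark~\ref{rem3.4} yields $\dim\mathcal{Q}=\operatorname{rank}(\mathcal{N})$. With this in place, the products in $\mathcal{R}$ take the form~\eqref{eq11}, and I would run through Lemma~\ref{lem1.8}--Lemma~\ref{lem1.10} essentially verbatim, followed by the induction on $\mathcal{N}^m$ used in the proof of Theorem~\ref{mainthm}.

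The only genuine point of care is the appearance of Koszul signs in the super Jacobi identity, since every computation in the previous section involves brackets of the shape $[n_{\alpha_i},[x_j,x_t]]$. But because $x_j,x_t\in\mathcal{R}_0$, the super Jacobi identity reduces to
\[
[n_{\alpha_i},[x_j,x_t]]=[[n_{\alpha_i},x_j],x_t]-[[n_{\alpha_i},x_t],x_j]
\]
regardless of the parity of $n_{\alpha_i}$; similarly the antisymmetry $[x_i,x_j]=-[x_j,x_i]$ holds unsignedly for $x_i,x_j\in\mathcal{Q}$. The inner automorphisms $\exp(d_{i,p})$ used in~\eqref{eq3.4.1} are exponentials of even nilpotent superderivations of $\mathcal{N}$, hence preserve parity and the multiplication table of $\mathcal{N}$. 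Consequently, the inductive arguments on the index $h$ and on the power $\mathcal{N}^m$ proceed word for word, yielding that each $\operatorname{ad}_{x|\mathcal{N}}$, $x\in\mathcal{Q}$, acts diagonally on $\mathcal{N}$ and that $[\mathcal{Q},\mathcal{Q}]=0$. This establishes $\mathcal{R}\cong\mathcal{R}_{\mathcal{T}}$.

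For uniqueness, I would invoke the conjugacy of maximal tori of $\mathcal{N}$ recalled just before Theorem~\ref{mainthm2}: any two maximal tori of $\mathcal{N}$ are conjugate by an inner automorphism of $\mathcal{N}$ lying in the radical of the derived algebra of $\operatorname{Der}(\mathcal{N})_0$, and the graded analogue of Remark~\ref{rem3.4} produces an isomorphism $\mathcal{R}_{\mathcal{T}}\cong\mathcal{R}_{\widetilde{\mathcal{T}}}$ extending this conjugation. I expect the main conceptual hurdle to be not any calculation but rather the clean justification that the three structural ingredients---the Lie-type theorem available under $[\mathcal{R}_1,\mathcal{R}_1]\subseteq[\mathcal{R}_0,\mathcal{R}_0]$, the forced evenness of $\mathcal{Q}$, and the conjugacy of maximal tori via Mostow's theorem---transfer faithfully to the superalgebra setting; once these are secured, the combinatorial heart of Theorem~\ref{mainthm} is parity-blind.
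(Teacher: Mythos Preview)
Your proposal is correct and matches the paper's approach exactly: the paper gives no formal proof of Theorem~\ref{mainthm2} but simply observes in the surrounding discussion that $\mathcal{Q}\subseteq\mathcal{R}_0$, that even superderivations satisfy the ordinary Leibniz rule, and that therefore the proof of Theorem~\ref{mainthm} transfers unchanged. Your write-up is in fact more careful than the paper's in checking that the super Jacobi identity introduces no signs (since all $x_j\in\mathcal{Q}$ are even) and that the automorphisms $\exp(d_{i,p})$ preserve parity; the only loose end is your inference of $\mathcal{Q}\subseteq\mathcal{R}_0$ from the vanishing of odd \emph{diagonalizable} superderivations, which is not quite a complete argument on its own---the paper obtains this inclusion by citing Propositions~3.12--3.13 of~\cite{super}.
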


In view of importance of a solvable Lie (super)algebra $\cal R_{\cal T}$, below we present the construction of a maximal torus of a nilpotent Lie (super)algebra $\cal N$.

Let $\cal N=\cal N_0\oplus \cal N_1$ be a nilpotent $d$-locally diagonalizable Lie superalgebra with $\cal N_0=Span\{e_1, \dots, e_n\}$, $\cal N_1=Span\{f_1, \dots, f_m\}$ and the table of multiplications

$$\left\{\begin{array}{llll}
[e_i,e_j]=\sum\limits_{t=1}^{n}a_{i,j}^te_t,\quad 1\le i,j\le n ,\\[1mm]
[e_i,f_j]=\sum\limits_{p=1}^{m}b_{i,j}^pf_p,\quad 1\le i\le n,\ \quad 1\le j\le m \\[1mm]
[f_i,f_j]=\sum\limits_{q=1}^{n}c_{i,j}^qe_q,\quad 1\le i,j\le m.\\[1mm]
\end{array}\right.$$

For $i, j, t$ such that $a_{i,j}^t\neq 0,\ b_{i,j}^p\neq0$ and $c_{i,j}^q\neq0${\color{blue},} we consider the system of the linear equations

$$S_{e,f}: \quad \left\{\begin{array}{lllllll}
\alpha_{i}+\alpha_{j}=\alpha_{t}, \\[1mm]
\alpha_{i}+\beta_{j}=\beta_{p}, \\[1mm]
\beta_{i}+\beta_{j}=\alpha_{q},\\[1mm]
\end{array}\right.$$
in the variables $\alpha_1, \dots, \alpha_n$ as $i, j, t$ run from $1$ to $n$ and $\beta_1, \dots, \beta_m$ as $i, j, p$ run from $1$ to $m$.

Following paper \cite{Leger1} we denote by $r\{e_1, \dots, e_n, f_1,\dots, f_m\}$ the rank of the system $S_{e,f}.$ Setting $r\{\cal N\}=min \  r\{x_1, \dots,x_n, y_1,\dots , y_m\}$ as $\{x_1, \dots, x_n, y_1,\dots , y_m\}$ runs over all bases of $\cal N$. Similar to Lie algebras case (see \cite{Leger1}) for a nilpotent Lie superalgebra $\cal N$ over an algebraically closed field the equality one can establish $dim \mathcal{T}=dim \cal N -r\{\cal N\}$ holds true.

It is notably that a diagonal transformation $d=diag(\alpha_1, \dots, \alpha_n,\beta_1,\dots,\beta_m)$ is a derivation of $\cal N$ if and only if $\alpha_i,\beta_j$  are solutions of the system $S_{e,f}$.

We denote the free parameters in the solutions to the system $S_{e,f}$ by $\alpha_{i_1},\dots, \alpha_{i_s},
\beta_{j_1},\dots, \beta_{j_t}$. Making renumeration the basis elements of $\mathcal{N}$ we can assume that $\alpha_{1},\dots, \alpha_{s},\beta_1,\dots,\beta_t$ are free parameters of $S_{e,f}$. Then we get
$$\alpha_{i}+\beta_j=\sum\limits_{p=1}^{s}\lambda_{i,j}\alpha_{p}+
\sum\limits_{q=1}^{t}\mu_{i,j}\alpha_{q},\ s+1\leq i\leq n,\ t+1\le j\le m.$$

Consider the basis $\{(\alpha_{1,i},  \dots, \alpha_{n,i}, \beta_{1,j},  \dots, \beta_{m,j}) \ |  \ 1\leq i\leq s, \ 1 \le j\le t\}$ of fundamental solutions of the system $S_{e,f}$. Then the diagonal matrices $\{diag(\alpha_{1,i}, \dots, \alpha_{n,i},\beta_{1,j},  \dots, \beta_{m,j}) \ | \ 1\leq i\leq s,\ 1\le j\le t\}$ forms a basis of a maximal torus of $\cal N$. It should be noted that in the case of nilpotent Lie algebras, one constructs a maximal torus analogously to the above method by taking $\cal N_1=0$.

Thus, having $\cal N$ and its a maximal torus of $\cal T$ one can assume that a solvable Lie (super)algebra $\cal R_{\cal T}=\cal N \rtimes \cal T$ is constructed.

In general, the description (up to isomorphism) of a maximal solvable extension of a given nilpotent Lie (super)algebra $\mathcal{N}$ is carried out using the standard approach (an extension of Mubarakzjanov's method), which often involves lengthy computations.
Moreover, the solvable Lie (super)algebras of the form $\mathcal{R}_{\mathcal{T}}$ do not, in general, describe all maximal solvable extensions of $\mathcal{N}$. However, in the case of maximal solvable extensions of a $d$-locally diagonalizable nilpotent Lie (super)algebra, together with the condition that the extension satisfies an analogue of Lie's theorem (which holds automatically in the Lie algebra case), Theorems~\ref{mainthm} and~\ref{mainthm2} allow us to describe all maximal solvable extensions in a considerably simpler way, namely, through the explicit construction of the (super)algebra $\mathcal{R}_{\mathcal{T}}$.

For instance, the alternative method presented above is applicable to the Lie algebra cases considered in \cite[Proposition~6]{ancochea}, \cite[Theorem~3]{NdWi}, \cite[Theorem~3]{SnWi}, \cite[Lemma~6]{TrWi}, and \cite[Theorem~4]{WaLiDe}, as well as to the Lie superalgebra cases discussed in \cite[Theorem~4.3]{Camacho} and \cite[Theorem~4.1]{Camacho1}.

\subsection{On innerness of solvable Lie algebras}
\

In this subsection, we investigate the conditions under which maximal solvable extensions of nilpotent Lie algebras admit only inner derivations, and we provide situations that give rise to outer derivations in non-maximal extensions.

The next result establishes sufficient conditions ensuring the existence of an outer derivation in a non-maximal solvable extension of a nilpotent Lie algebra.
\begin{prop} \label{thmH1neq0}
Let $\mathcal{R} = \mathcal{N} \oplus \mathcal{Q}$ be a non-maximal solvable extension of a nilpotent Lie algebra $\mathcal{N}$, and let $\{x_1, \dots, x_s\}$ be a basis of $\mathcal{Q}$.
If one of the following conditions holds
\begin{itemize}
    \item[(i)] $\operatorname{Center}(\mathcal{R}) \neq 0;$
    \item[(ii)] the diagonal parts of the operators $\operatorname{ad}_{x_i}$, in their upper-triangular matrix representations, generate a torus of $\mathcal{R}$,
\end{itemize}
then $\mathcal{R}$ admits an outer derivation.
\end{prop}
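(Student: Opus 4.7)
The plan is to treat the two conditions separately, in each case deducing the existence of an outer derivation by contradiction from the hypothesis $\operatorname{Der}(\mathcal{R}) = \operatorname{InDer}(\mathcal{R})$.

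For case (i), I would apply Proposition \ref{prop1} (Leger) directly: since $\mathcal{R}$ is solvable (non-nilpotent by the conventions of the paper) and has non-trivial center, the equality $\operatorname{Der}(\mathcal{R}) = \operatorname{InDer}(\mathcal{R})$ would force $\mathcal{R}$ to be non-solvable, a contradiction. Hence $\mathcal{R}$ admits an outer derivation. Note that the non-maximality hypothesis plays no role here.

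For case (ii), I would first invoke the Jordan decomposition of the upper-triangular operators: write $\operatorname{ad}_{x_i} = s_i + n_i$ where $s_i$ is the diagonal (semisimple) part and $n_i$ is the strictly upper-triangular (nilpotent) part, both lying in $\operatorname{Der}(\mathcal{R})$. By hypothesis, $T := \operatorname{span}\{s_1,\dots,s_s\}$ is a torus of $\mathcal{R}$, so its restriction $T|_{\mathcal{N}}$ is a torus of $\mathcal{N}$. Assuming $\operatorname{Der}(\mathcal{R}) = \operatorname{InDer}(\mathcal{R})$, case (i) lets me further assume $\operatorname{Center}(\mathcal{R}) = 0$. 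Then $s_i = \operatorname{ad}_{y_i}$ with $y_i \notin \mathcal{N}$ and the $y_i$ pairwise commuting; since an ad-nilpotent element of a solvable Lie algebra must lie in the nilradical, one has $n_i = \operatorname{ad}_{z_i}$ with $z_i \in \mathcal{N}$. Replacing the basis of $\mathcal{Q}$ by $\{y_1,\dots,y_s\}$ identifies $\mathcal{R}$ with the semidirect product $\mathcal{R}_{T|_{\mathcal{N}}}$.

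Next, I would show that $T|_{\mathcal{N}}$ cannot be a maximal torus of $\mathcal{N}$. If it were, any derivation $e \in \operatorname{Der}(\mathcal{N})$ nil-independent from $T|_{\mathcal{N}}$ would satisfy $e = e_s + e_n$ with $e_s$ semisimple and commuting with $T|_{\mathcal{N}}$ (hence $e_s \in T|_{\mathcal{N}}$ by maximality); but then a non-trivial combination of $e$ and a basis of $T|_{\mathcal{N}}$ would be nilpotent, contradicting nil-independence. This would force $\mathcal{R}$ to be a maximal extension, against hypothesis. Consequently, I can select a diagonalizable $d \in \operatorname{Der}(\mathcal{N})$ inside a maximal torus of $\mathcal{N}$ containing $T|_{\mathcal{N}}$ with $d \notin T|_{\mathcal{N}}$, and extend it to $\widetilde{d} \in \operatorname{Der}(\mathcal{R})$ by setting $\widetilde{d}(y_i) = 0$; the derivation identity holds because $[d, s_i|_{\mathcal{N}}] = 0$ (same torus) and $[y_i,y_j] = 0$. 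To see $\widetilde d$ is outer, I would suppose $\widetilde d = \operatorname{ad}_{w}$ with $w = n_0 + \sum c_i y_i$ and note that $\operatorname{ad}_{w|\mathcal{N}} = \operatorname{ad}_{n_0|\mathcal{N}} + \sum c_i s_i|_{\mathcal{N}}$ decomposes as nilpotent plus semisimple in $T|_{\mathcal{N}}$; uniqueness of the Jordan decomposition of $d$ would put $d$ in $T|_{\mathcal{N}}$, a contradiction.

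The main obstacle is the intermediate claim that $T|_{\mathcal{N}}$ fails to be a maximal torus of $\mathcal{N}$ precisely because $\mathcal{R}$ is non-maximal, together with the identification $\mathcal{R} \cong \mathcal{R}_{T|_{\mathcal{N}}}$ under the assumption that all derivations are inner. Both hinge on carefully separating the semisimple and nilpotent parts of derivations and keeping track of where they live (in $\mathcal{N}$, in the fixed torus, or in the center), and on the classical characterization of the nilradical of a solvable complex Lie algebra as the set of ad-nilpotent elements.
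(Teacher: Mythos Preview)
Your treatment of case~(i) and the reduction step in case~(ii) (passing to a basis $\{y_i\}$ with $\operatorname{ad}_{y_i}$ diagonal and $[y_i,y_j]=0$, then extending a torus element $d$ by $\widetilde d(y_i)=0$) match the paper's argument closely.

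The gap is in your claim that $T|_{\mathcal N}$ cannot be a maximal torus. You assert that for any $e\in\operatorname{Der}(\mathcal N)$ nil-independent from $T|_{\mathcal N}$, the semisimple part $e_s$ commutes with $T|_{\mathcal N}$; but the Jordan--Chevalley decomposition only gives $[e_s,e_n]=0$, not $[e_s,T|_{\mathcal N}]=0$, and you offer no reason for the latter. In fact the implication ``$T|_{\mathcal N}$ maximal $\Rightarrow$ $\mathcal R$ maximal'' is false in general: Example~\ref{exam3.5} of the paper exhibits a nilpotent $\mathcal N$ with $\operatorname{codim}\mathcal N>\operatorname{rank}(\mathcal N)$ in a maximal extension, so $\mathcal R_{\mathcal T}$ with $\mathcal T$ a maximal torus is already a \emph{non}-maximal extension. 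Your argument therefore collapses precisely when $\dim\mathcal Q=\operatorname{rank}(\mathcal N)$.

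The paper treats this case separately. When $\widetilde{\mathcal Q}$ acts as a maximal torus, non-maximality of $\mathcal R$ yields a strictly larger solvable extension $\mathcal N\oplus\mathcal V$ with $\mathcal V=\operatorname{Span}\{\widetilde{\mathcal Q},v\}$. Then $\mathcal R$ is an ideal of $\mathcal N\oplus\mathcal V$ (since $[\mathcal V,\mathcal V]\subseteq\mathcal N$), so $\operatorname{ad}_v|_{\mathcal R}$ is a derivation of $\mathcal R$. It is outer because the diagonal part of $\operatorname{ad}_v|_{\mathcal N}$ is \emph{not} a derivation of $\mathcal N$ (otherwise it would enlarge the maximal torus), whereas the diagonal part of any $\operatorname{ad}_w|_{\mathcal N}$ with $w\in\mathcal R$ lies in $T|_{\mathcal N}$. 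You need to add this second branch to complete the proof.
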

\begin{proof}  If $\mathcal{R}$ satisfies condition~(i), then, by Proposition~\ref{prop1}, the existence of an outer derivation of $\mathcal{R}$ follows immediately.

Assume that $\mathcal{R}$ satisfies condition~(ii). By virtue of condition~(i), we may suppose that $\operatorname{Center}(\mathcal{R}) = 0$.
Since the nilradical $\mathcal{N}$ is a characteristic ideal of $\mathcal{R}$ (it is invariant under every derivation of $\mathcal{R}$), we conclude that the diagonal entries of the operators $\operatorname{ad}_{x_i}|_{\mathcal{N}}$ in their upper-triangular forms generate a torus of $\mathcal{N}$. Therefore,  $\dim \mathcal{Q} \leq \operatorname{rank}(\mathcal{N}).$ Each operator $\operatorname{ad}_{x_i}$ can be decomposed as
$\operatorname{ad}_{x_i} = d_i + d_{n_i},$
where $d_i$ and $d_{n_i}$ are commuting diagonalizable and nilpotent derivations of $\mathcal{R}$, respectively. By condition~(ii), $d_i$ and $d_{n_i}$ can be represented by diagonal and strictly upper-triangular matrices.

If $d_{n_i} \notin \operatorname{InDer}(\mathcal{R})$ for some $i$, the claim follows immediately.
Otherwise, assume $d_{n_i} \in \operatorname{InDer}(\mathcal{R})$ for all $i$, so that $d_{n_i} = \operatorname{ad}_{y_i}$ for certain elements $y_i \in \mathcal{R}$.
Define the subspace $\widetilde{\mathcal{Q}} = \operatorname{Span}\{z_i = x_i - y_i\}.$
Then $\widetilde{\mathcal{Q}}$ is complementary to $\mathcal{N}$ and forms an abelian subalgebra of $\mathcal{R}$.
Indeed, using the diagonal nature of $\operatorname{ad}_{z_i}$ and the identity
$0 = [\operatorname{ad}_{z_i}, \operatorname{ad}_{z_j}] = \operatorname{ad}_{[z_i, z_j]},$ we conclude that $[z_i, z_j] \in \operatorname{Center}(\mathcal{R}) = 0$.
Thus, $\widetilde{\mathcal{Q}}$ is abelian algebra that acts diagonally on $\mathcal{N}$.

If $\dim \widetilde{\mathcal{Q}} = \dim \mathcal{Q} < \operatorname{rank}(\mathcal{N})$, then $\operatorname{ad}(\widetilde{\mathcal{Q}})|_{\mathcal{N}}$ forms a torus of $\mathcal{N}$ properly contained in a maximal torus $\mathcal{T}$ of $\mathcal{N}$.
Choose $\widetilde{d} \in \mathcal{T} \setminus \operatorname{ad}(\widetilde{\mathcal{Q}})|_{\mathcal{N}}$ and extend it to a derivation of $\mathcal{R}$ by setting $\widetilde{d}(\widetilde{\mathcal{Q}}) = 0$.
Then $\widetilde{d}$ is an outer derivation of $\mathcal{R}$.

Now suppose $\dim \mathcal{Q} = \operatorname{rank}(\mathcal{N})$.
In this case, $\widetilde{\mathcal{Q}}$ coincides with a maximal torus $\mathcal{T}$ of $\mathcal{N}$.
Since $\mathcal{R}$ is a non-maximal extension of $\mathcal{N}$, there exists a solvable extension $\mathcal{N} \oplus \mathcal{V}$ such that $\mathcal{V} = \operatorname{Span}\{\widetilde{\mathcal{Q}}, v\}$, where $\widetilde{\mathcal{Q}}$ acts diagonally on $\mathcal{N}$.
Note that the derivations ${\operatorname{ad}_v}_{|\mathcal{N}}$ and ${\operatorname{ad}_{z_i}}_{|\mathcal{N}},$ $1 \leq i \leq s,$ are nil-independent, and the diagonal part of ${\operatorname{ad}_v}_{|\mathcal{N}}$ in its upper-triangular form is not a derivation of $\mathcal{N}$.
Hence, ${\operatorname{ad}_v}_{|\mathcal{N}}$ defines an outer derivation of $\mathcal{R}$.
\end{proof}

Recall, a centerless Lie algebra that admits only inner derivations is called {\it complete algebra.} For a given Lie algebra $\cal L$ we denote by $\cal C(\cal T)$ the commutator of $\cal T$ in $\operatorname{Der}(\cal L).$

During establishing the innerness of all derivation of maximal solvable extensions we shall use the following result.
\begin{thm} \cite{Hsie}\label{thmcomplete} Let $\cal L$ be a Lie algebra and $\cal T$ be a torus on $\cal L$ such that $0\notin W$. Then $\cal L \rtimes \cal C(\cal T)$ is equal to $Der(\cal L \rtimes \cal T)$ and it is a complete Lie algebra.
\end{thm}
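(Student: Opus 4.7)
The plan is to construct an explicit isomorphism
$\Phi\colon\mathcal{L}\rtimes\mathcal{C}(\mathcal{T})\to\operatorname{Der}(\mathcal{L}\rtimes\mathcal{T})$,
and then deduce completeness from the resulting structure. For the embedding, I would send $(x,d)\mapsto\operatorname{ad}_{x}+\widehat{d}$, where $\widehat{d}$ extends $d\in\mathcal{C}(\mathcal{T})\subseteq\operatorname{Der}(\mathcal{L})$ to $\mathcal{L}\rtimes\mathcal{T}$ by declaring $\widehat{d}|_{\mathcal{T}}=0$. The derivation identity for $\widehat{d}$ on mixed brackets $[t,x]$ reduces to $d\circ t=t\circ d$ on $\mathcal{L}$, which is precisely the centralizer condition. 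Injectivity uses $0\notin W$: if $\operatorname{ad}_{x}+\widehat{d}$ kills every $t\in\mathcal{T}$, then $t(x)=0$ for all $t$, so $x$ lies in the zero weight space $\mathcal{L}_{0}=0$, and then $\widehat{d}=0$ forces $d=0$.

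For surjectivity, the main ingredient is the weight decomposition $\mathcal{L}\rtimes\mathcal{T}=\mathcal{T}\oplus\bigoplus_{\alpha\in W}\mathcal{L}_{\alpha}$ under $\operatorname{ad}(\mathcal{T})$, in which $\mathcal{T}$ is exactly the zero weight space thanks to $0\notin W$. Given $D\in\operatorname{Der}(\mathcal{L}\rtimes\mathcal{T})$, characteristicity of the nilradical $\mathcal{L}$ gives $D(\mathcal{L})\subseteq\mathcal{L}$; write $D(t)=u_{t}+s_{t}$ with $u_{t}\in\mathcal{L}$ and $s_{t}\in\mathcal{T}$. Applying the derivation identity to $[t,x]$ for $x\in\mathcal{L}_{\alpha}$ and projecting to the weight-$\alpha$ component yields $\alpha(s_{t})\,x+[u_{t}^{0},x]=0$; since $u_{t}^{0}\in\mathcal{L}_{0}=0$, this forces $\alpha(s_{t})=0$ for every root $\alpha\in W$, and the faithful action of $\mathcal{T}$ on $\mathcal{L}$ then yields $s_{t}=0$. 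Hence $D(\mathcal{T})\subseteq\mathcal{L}$.

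Applying the derivation identity to $[t_{1},t_{2}]=0$ next gives the compatibility $\alpha(t_{1})u_{t_{2}}^{\alpha}=\alpha(t_{2})u_{t_{1}}^{\alpha}$ on each root space. Because $W$ is finite and $0\notin W$, one can pick $t_{0}\in\mathcal{T}$ with $\alpha(t_{0})\neq 0$ for every $\alpha\in W$ and set $y=-\sum_{\alpha\in W}u_{t_{0}}^{\alpha}/\alpha(t_{0})\in\mathcal{L}$; a direct check then gives $\operatorname{ad}_{y}(t)=-t(y)=u_{t}=D(t)$ for every $t\in\mathcal{T}$. Passing to $D':=D-\operatorname{ad}_{y}$, which vanishes on $\mathcal{T}$, the derivation identity on $[t,x]$ collapses to $D'(t(x))=t(D'(x))$, so $D'|_{\mathcal{L}}\in\mathcal{C}(\mathcal{T})$, giving $D=\Phi(y,D'|_{\mathcal{L}})$ and completing the proof that $\Phi$ is an isomorphism.

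For completeness of $\mathcal{M}:=\mathcal{L}\rtimes\mathcal{C}(\mathcal{T})$, triviality of the center is immediate from $0\notin W$: a central $(x,d)$ must commute with every $(0,t)$, forcing $t(x)=0$ for all $t$ and hence $x=0$, and then $d|_{\mathcal{L}}=0$ yields $d=0$. Showing every derivation of $\mathcal{M}$ is inner is the main obstacle. I would again exploit that $\mathcal{L}$ remains the nilradical of $\mathcal{M}$ (hence characteristic) and that $\operatorname{ad}(\mathcal{T})$ still provides a root decomposition with no zero weight on $\mathcal{L}$, replicating the surjectivity argument one level higher. The subtlety is that $\mathcal{C}(\mathcal{T})$ need not be abelian and may strictly contain $\mathcal{T}$, so the weight analysis on the $\mathcal{C}(\mathcal{T})$-factor is more delicate than in the previous step; I would expect to first modify an arbitrary derivation of $\mathcal{M}$ by an inner derivation so that it preserves the ideal $\mathcal{L}\rtimes\mathcal{T}$, then invoke the already-established isomorphism $\operatorname{Der}(\mathcal{L}\rtimes\mathcal{T})\cong\mathcal{M}$ to conclude.
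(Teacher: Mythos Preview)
This theorem is quoted in the paper from an external source (Hsie, \cite{Hsie}) and is \emph{not} proved in the paper itself; there is therefore no ``paper's own proof'' to compare your proposal against. Your write-up is essentially a self-contained attempt at the result, and most of it is sound, but two points deserve comment.

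First, in your surjectivity argument you justify $D(\mathcal{L})\subseteq\mathcal{L}$ by calling $\mathcal{L}$ the nilradical of $\mathcal{L}\rtimes\mathcal{T}$. In the theorem as stated, $\mathcal{L}$ is an arbitrary Lie algebra, not necessarily nilpotent, so this is not a valid reason. The conclusion is still correct, however: since $0\notin W$, every root space $\mathcal{L}_{\alpha}$ satisfies $\mathcal{T}(\mathcal{L}_{\alpha})=\mathcal{L}_{\alpha}$, whence $[\mathcal{L}\rtimes\mathcal{T},\mathcal{L}\rtimes\mathcal{T}]=[\mathcal{L},\mathcal{L}]+\mathcal{T}(\mathcal{L})=\mathcal{L}$, so $\mathcal{L}$ is the derived ideal and hence characteristic. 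You should replace the nilradical argument by this one (the same remark applies to the completeness step, where you again invoke ``$\mathcal{L}$ remains the nilradical of $\mathcal{M}$'').

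Second, your treatment of the completeness of $\mathcal{M}=\mathcal{L}\rtimes\mathcal{C}(\mathcal{T})$ is only a sketch. The idea of modifying an arbitrary $D\in\operatorname{Der}(\mathcal{M})$ by an inner derivation so that it preserves $\mathcal{L}\rtimes\mathcal{T}$, and then using the already-established identification $\operatorname{Der}(\mathcal{L}\rtimes\mathcal{T})\cong\mathcal{M}$, is the right strategy, but the passage ``the weight analysis on the $\mathcal{C}(\mathcal{T})$-factor is more delicate'' hides genuine work: one must argue that $D(\mathcal{C}(\mathcal{T}))\subseteq\mathcal{L}\oplus\mathcal{C}(\mathcal{T})$ projects trivially to any complement of $\mathcal{T}$ in $\mathcal{C}(\mathcal{T})$ after an inner correction, and that the residual piece is $\operatorname{ad}_{m}$ for some $m\in\mathcal{M}$. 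As written, this step is not yet a proof.
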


\begin{rem} \label{rem2.4} It should be noted that
$\cal L \rtimes \cal C(\cal T)$ is isomorphic to the algebra $\cal L \rtimes \big(\cal C(\cal T) + \cal U\big)$ for $\cal U\subseteq \operatorname{InDer}(\cal L).$ Indeed, setting $\cal U$  and taking the basis transformation of the form $c_i'=c_i-u_i$ for appropriate basis elements $c_i$ of $\cal C(\cal T)$ and elements $u_i\in \cal U$, we obtain that $\cal L \rtimes \cal C(\cal T) \cong \cal L \rtimes \big(\cal C(\cal T) + \cal U\big).$
\end{rem}

Below we prove the main result of this section that establish the innerness of all derivations in maximal solvable extensions  of $d$-locally diagonalizable nilpotent Lie algebras.

\begin{thm} \label{thm5.4} A maximal solvable extension of a $d$-locally diagonalizable nilpotent Lie algebra admits only inner derivations.
\end{thm}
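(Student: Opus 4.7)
The plan is to identify $\mathrm{Der}(\mathcal{R}_{\mathcal{T}})$ explicitly through Hsie's theorem, reduce the whole statement to a single algebraic question about the centralizer of $\mathcal{T}$ in $\mathrm{Der}(\mathcal{N})$, and then force that centralizer to collapse onto $\mathcal{T}$ using the conditions of $d$-local diagonalizability.

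By Theorem \ref{mainthm} any maximal solvable extension of $\mathcal{N}$ is isomorphic to $\mathcal{R}_{\mathcal{T}} = \mathcal{N}\rtimes \mathcal{T}$, so it is enough to prove the claim for $\mathcal{R}_{\mathcal{T}}$. Condition~(iii) of Definition~\ref{defn3.2} gives $0\notin W$, so Theorem~\ref{thmcomplete} applies and yields $\mathrm{Der}(\mathcal{R}_{\mathcal{T}})=\mathcal{N}\rtimes \mathcal{C}(\mathcal{T})$, where $\mathcal{N}$ embeds via $\mathrm{ad}$ and each $D\in\mathcal{C}(\mathcal{T})$ is extended to $\mathcal{R}_{\mathcal{T}}$ by $D(\mathcal{T})=0$. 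Under this identification the inner derivation $\mathrm{ad}_t$ of $\mathcal{R}_{\mathcal{T}}$ corresponds to $t$ itself regarded as an element of $\mathcal{C}(\mathcal{T})$, so $\mathrm{InDer}(\mathcal{R}_{\mathcal{T}})=\mathcal{N}\rtimes \mathcal{T}$ sits inside $\mathcal{N}\rtimes \mathcal{C}(\mathcal{T})$. A direct computation using $0\notin W$ shows $Z(\mathcal{R}_{\mathcal{T}})=0$, compatible with the isomorphism $\mathrm{ad}\colon \mathcal{R}_{\mathcal{T}}\to \mathrm{InDer}(\mathcal{R}_{\mathcal{T}})$. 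Consequently the theorem is reduced to proving $\mathcal{C}(\mathcal{T})=\mathcal{T}$.

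For this last step, let $D\in\mathcal{C}(\mathcal{T})$ and take its Jordan decomposition $D=D_s+D_n$, whose parts also commute with $\mathcal{T}$. Since $\mathcal{T}+\mathbb{C}D_s$ is an abelian subalgebra of $\mathrm{Der}(\mathcal{N})$ consisting of semisimple endomorphisms, maximality of $\mathcal{T}$ forces $D_s\in\mathcal{T}$, and it remains to show $D_n=0$. For any nonzero $t\in\mathcal{T}$ the derivation $t+D_n$ has semisimple part $t\neq 0$ and is therefore non-nilpotent, so it qualifies as a nil-independent derivation. In the fixed basis where all nil-independent derivations are upper-triangular, both $t+D_n$ and $t$ are upper-triangular, hence so is $D_n$; being nilpotent, $D_n$ is strictly upper-triangular. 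Since $D_n$ commutes with $\mathcal{T}$ it preserves every root space $\mathcal{N}_{\alpha}$, and the restriction-projection of $t+D_n$ on $\mathcal{N}_{\alpha}$ equals $\alpha(t)\mathrm{id}+D_n|_{\mathcal{N}_{\alpha}}$. Condition~(ii) of Definition~\ref{defn3.2} forces this operator to be diagonal, so $D_n|_{\mathcal{N}_{\alpha}}$ is simultaneously diagonal and nilpotent on $\mathcal{N}_{\alpha}$, hence zero. Ranging over $\alpha\in W$ gives $D_n=0$, so $\mathcal{C}(\mathcal{T})=\mathcal{T}$ and the theorem follows.

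The main obstacle is the final paragraph, in particular justifying that condition~(ii) may legitimately be invoked for the synthesized derivation $t+D_n$ and checking that the chosen basis simultaneously puts $D_n$ into strictly upper-triangular form so that the root-space calculation is meaningful; once these are in hand, the reduction via Hsie's theorem and the Jordan-decomposition step are standard.
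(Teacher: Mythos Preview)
Your proof is correct and follows essentially the same route as the paper: reduce to $\mathcal{R}_{\mathcal{T}}$ via Theorem~\ref{mainthm}, apply Theorem~\ref{thmcomplete} to reduce to $\mathcal{C}(\mathcal{T})=\mathcal{T}$, absorb the semisimple Jordan part into $\mathcal{T}$ by maximality, and kill the nilpotent part by feeding a nil-independent derivation built from it through condition~(ii). Your handling of $D_n$ via a single $t+D_n$ is slightly more streamlined than the paper's case split on whether $d_0(\mathcal{N}_{\alpha})$ vanishes, but the underlying idea is identical.
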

\begin{proof} Thanks to Theorem \ref{mainthm} it suffices to consider an algebra of the form $\cal R_{\cal T}.$ Taking into account Theorem \ref{thmcomplete} and centerless of the algebra $\cal R_{\cal T}$ (because of condition (iii) of Definition  \ref{defn3.2}) we obtain
$$Der(\cal N\rtimes\cal T)\cong \cal N\rtimes \cal C(\cal T) \supseteq \cal N\rtimes\cal T \cong {\rm ad}(\cal N\rtimes\cal T).$$
Therefore, to complete the proof of the theorem, it is enough to establish that  $\cal C(\cal T)=\cal T.$ It is not difficult to check that  $d\in \cal C(\cal T)$ if and only if $d(\cal N_{\alpha}) \subseteq \cal N_{\alpha}$ for any root spaces $\cal N_{\alpha}, \alpha\in W.$

Let $d$ be an arbitrary element of the space $\mathcal{C}(\mathcal{T})$, and let
$d = d_0 + d_1$ be its decomposition into commuting diagonalizable and nilpotent derivations.
It is sufficient to consider the case $d_1 \neq 0$. Indeed, if $d_1 \neq 0$, then
$\operatorname{Span}\{\mathcal{T}, d\}$ forms a torus on $\mathcal{N}$ which, by the maximality of $\mathcal{T}$, must coincide with $\mathcal{T}$. Hence, $d \in \mathcal{T}$.

Fix $\alpha \in W$ such that $d_1(\mathcal{N}_{\alpha}) \neq 0$.
If $d_0(\mathcal{N}_{\alpha}) = 0$, then by considering the set of all nil-independent derivations containing the derivation $t_{\alpha} + d_1$, we obtain a contradiction with the $d$-local diagonalizability.
If $d_0(\mathcal{N}_{\alpha}) \neq 0$, then again, by considering the set of all nil-independent derivations containing the derivation $d$, we arrive at a contradiction with the $d$-local diagonalizability.
Therefore, we conclude that $\mathcal{C}(\mathcal{T}) = \mathcal{T}$. \end{proof}

\begin{cor} \label{cor5.5}  A maximal solvable extension of a $d$-locally diagonalizable nilpotent Lie algebra is complete.
\end{cor}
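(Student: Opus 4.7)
My plan is to derive the corollary directly from the machinery already assembled in this section. Recall that a Lie algebra is complete precisely when it is centerless and every derivation is inner. By Theorem~\ref{mainthm}, it suffices to treat the model extension $\cal R_{\cal T} = \cal N \rtimes \cal T$, and Theorem~\ref{thm5.4} already supplies $\operatorname{Der}(\cal R_{\cal T}) = \operatorname{InDer}(\cal R_{\cal T})$. Hence the only step remaining is to verify that $\operatorname{Center}(\cal R_{\cal T}) = 0$.

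For the centerlessness I would take $z = n + t \in \operatorname{Center}(\cal R_{\cal T})$ with $n \in \cal N$ and $t \in \cal T$. The condition $[z, \cal T] = 0$ forces $n$ to lie in the zero root subspace of $\cal N$, but condition~(iii) of Definition~\ref{defn3.2} says $0 \notin W$, so $n = 0$. Then $z = t$, and $[t, n_{\alpha}] = \alpha(t)\, n_{\alpha}$ must vanish on every root vector, giving $\alpha(t) = 0$ for all $\alpha \in W$. Since the primitive roots form a basis of $\cal T^{\ast}$, this yields $t = 0$.

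Alternatively, one can bypass the explicit center computation by appealing to Theorem~\ref{thmcomplete}: condition~(iii) is precisely its hypothesis $0 \notin W$, and the identity $\cal C(\cal T) = \cal T$ established inside the proof of Theorem~\ref{thm5.4} gives $\cal R_{\cal T} = \cal N \rtimes \cal C(\cal T)$, whose completeness is asserted by that theorem. Either route is essentially immediate, and I anticipate no real obstacle; the corollary is a repackaging of Theorems~\ref{mainthm}, \ref{thmcomplete}, and~\ref{thm5.4}, with the only thing to check being that the same maximal torus $\cal T$ is used consistently throughout the reduction.
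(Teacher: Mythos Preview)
Your proposal is correct and matches the paper's implicit reasoning: the corollary is stated without proof, as it follows at once from Theorem~\ref{thm5.4} together with the centerlessness of $\cal R_{\cal T}$ already noted inside that proof (via condition~(iii) of Definition~\ref{defn3.2}). Both of your routes---the direct root-space computation of the center and the appeal to Theorem~\ref{thmcomplete} with $\cal C(\cal T)=\cal T$---are valid, and the second is exactly the mechanism the paper relies on.
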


\subsection{Further examples, conjectures and open direction}
\

It is well known that the quotient space of derivations by inner derivations defines the first cohomology group with coefficients in the adjoint module.
By Theorem \ref{mainthm}, we conclude that this first cohomology group vanishes.
From the cohomological point of view, a natural question arises: do the higher-order cohomology groups of maximal solvable extensions of $d$-locally diagonalizable nilpotent Lie algebras in coefficients itself also vanish?

It was shown in \cite{Ancochea222} and \cite{Leger2} that certain maximal solvable extensions of $d$-locally diagonalizable nilpotent Lie algebras possess vanishing higher-order cohomology groups.
In particular, the maximal solvable extension of an indecomposable nilpotent Lie algebra with characteristic sequence $(n_1, \dots, n_k, 1)$ and of the highest possible rank has trivial cohomology groups of order less than four, while all cohomology groups of any order for a Borel subalgebra of a semisimple Lie algebra vanish.

The next example shows that these results are not true, in general.
\begin{exam} Consider the maximal solvable extension of $d$-locally diagonalizable filiform Lie algebra given by the following non-zero products
$$\cal R: \quad
\begin{array}{lllllllll}
[e_1, e_i]= e_{i+1}, & 2 \leq i \leq n-1, & [e_2, e_i]=e_{j+2}, & 3 \leq i \leq n-2, & [e_i, x]=ie_i, & 1 \leq i \leq n.\\[1mm]
\end{array}$$

Then thanks to Theorem \ref{thm5.4} we have $H^1(\cal R, \cal R)=0$, while straightforward computations show that $H^2(\cal R, \cal R)\neq 0$.
\end{exam}

Let $\mathcal{R} = \mathcal{N} \oplus \mathcal{Q}$, where $\mathcal{Q}$ acts diagonally on $\mathcal{N}$. By the Hochschild--Serre factorization theorem, the condition $H^n(\mathcal{R}, \mathcal{R}) = 0$ implies that $H^1(\mathcal{R}, \mathcal{R}) = 0$. Consequently, solvable Lie algebras with vanishing higher cohomology should be sought among those for which all derivations are inner. In this regard, the class of maximal solvable extensions of $d$-locally diagonalizable nilpotent Lie algebras plays a particularly significant role.

The following example shows that assertions of Theorem \ref{mainthm} and Theorem \ref{thm5.4} do not true for non-Lie Leibniz algebras case.

\begin{exam} Consider the nilpotent Leibniz algebra $\cal N$ with the following multiplication table
$$\begin{array}{llll}
[e_i,e_1]=e_{i+1},& 1\le i \le n-4p-1, & [e_1,f_j]=f_{j+2p},& 1\le j\le 2p,
\end{array}$$
where $\{e_1, \dots, e_{n-p}, f_1, \dots, f_p\}$ is a basis of $\cal N$. One can verify that $\cal N$ is $d$-locally diagonalizable. In paper \cite{Adashev}, all maximal solvable extensions of $\cal N$ are described. 
Due to this description we obtain infinitely many non-isomorphic maximal solvable extensions of $\cal N$, which admit outer derivations.
\end{exam}

The methods developed for maximal solvable extensions rely on three fundamental conditions. Existing examples including nilpotent Lie algebras of maximal rank (i.e., those for which $\operatorname{rank}(\mathcal{N}) = \dim(\mathcal{N}/\mathcal{N}^2)$) and Examples~\ref{exam15}, \ref{exam3.3}, and \ref{exam3.5} demonstrate several distinct scenarios:

\begin{itemize}
\item[--] all three conditions (i)--(iii) are satisfied;
\item[--] only condition (i) holds, while (ii) and (iii) fail;
\item[--] conditions (i) and (iii) hold, but condition (ii) fails;
\item[--] only condition (iii) holds, while (i) and (ii) fail.
\end{itemize}

Since no examples are known that realize any other combination of these conditions, the following natural hypothesis arises:

\textbf{Conjecture 1.} Condition~(ii) implies the validity of conditions~(i) and~(iii). In other words, if the restrictions and projections of all nil-independent derivations on each root subspace relative to a maximal torus are diagonal, then the diagonals of all nil-independent derivations generate a maximal torus and no zero root occurs.

According to Example \ref{exam3.5}, we expect that a slight modification of the method developed in this paper will also apply to the description of maximal solvable extensions of nilpotent Lie algebras satisfying the following conditions (in upper-triangular matrix forms of all nil-independent derivations):
\begin{itemize}
\item the diagonal parts of all nil-independent derivations of a nilradical do not generate a maximal torus $\mathcal{T}$ in $\mathcal{N}$;
\item the restrictions and projections of all nil-independent derivations of a nilradical onto the blocks corresponding to multiple identical diagonal entries are scalar matrices.
\end{itemize}
The investigation devoted to this description will be the subject of future work.

The detailed computations show that the algebra in Example~\ref{exam3.5} admits, up to isomorphism, a unique maximal solvable extension. Taking this fact into account together with Conjecture~1, we can propose the following refined formulation of \v{S}nobl's conjecture:

\textbf{Conjecture 2.} A solvable extension of a nilpotent (non-characteristically nilpotent) Lie algebra is unique up to isomorphism if and only if one of the following conditions on its nilradical is satisfied (assuming that all nil-independent derivations are represented by upper-triangular matrices):
\begin{itemize}
\item[--] The restrictions and projections of all nil-independent derivations on each root subspace with respect to a maximal torus $\mathcal{T}$ are diagonal.
\item[--] The diagonals of all nil-independent derivations do not generate a maximal torus $\mathcal{T}$ on $\mathcal{N}$, and the restrictions and projections of all nil-independent derivations of the nilradical onto blocks corresponding to multiple identical diagonal entries are scalar matrices.
\end{itemize}

\

{\bf Acknowledgement} {{\small{\ We would like to thank V. Gorbatsevich for valuable discussions.}}
\\[3mm]
{\bf Data availability} {{\small{\ Data sharing not applicable to this article as no datasets were generated or analysed during
the current study.}}
\\[3mm]
{\bf Conflict of interest} {{\small{\ The authors have no competing interests to declare that are relevant to the content of this
article.}}

\end{document}